\newtheorem{theorem}{Theorem}[section]
\newtheorem{corollary}{Corollary}[theorem]
\newtheorem{lemma}[theorem]{Lemma}
\theoremstyle{definition}
\theoremstyle{remark}
\numberwithin{equation}{section}
\begin{document}

\title[Components of Hilbert scheme in Grassmannians]{Connected Components in the Hilbert Scheme of hypersurfaces in Grassmannians}

\author{See-Hak Seong}
\address{Department of Mathematics,Statistics and CS, University of Illinois at Chicago, Chicago, Illinois 60607}
\email{sseong3@uic.edu}

\begin{abstract}
We show that when $d \geq 3$, the Hilbert scheme $Hilb_{dT+1-\binom{d-1}{2}}(G(k,n))$ has 2 components, even though elements in both components have the same cohomology class. Moreover, we show that the Hilbert scheme associated to the Hilbert polynomial $\binom{T+n}{n}-\binom{T+n-d}{n}$ in Grassmannian has at most 2 connected components.
\end{abstract}

\maketitle

\section*{Contents}
~\\
1. Introduction \hfill 1 \\
2. Preliminaries \hfill 2\\
3. Disconnectivity of $Hilb_{P_{d,m}(T)}(G(k,n))$ for $m>2,d\geq 3$ \hfill 3\\
4. Disconnectivity of $Hilb_{P_{d}(T)}(G(k,n))$ for $d\geq 3$ \hfill 7\\
References

\section{Introduction}
Let $P(T)$ be a polynomial and let $Hilb_{P(T)}(X)$ be the Hilbert scheme associated to the polynomial $P(T)$. In 1966, Hartshorne proved that $Hilb_{P(T)}(\mathbb{P}^n)$ is connected [1]. Moreover, it is well-known that the Hilbert scheme of points of a connected variety is connected [2]. In ``Algebraic Geometry and its Broader Implications'' conference, Ignacio Sols asked whether $Hilb_{P(T)}(G(k,n))$ is connected. We show that $Hilb_{P(T)}(G(k,n))$ is not always connected.

\begin{theorem}
For $d \geq 3$ and $1<k<n-1$, $Hilb_{dT+1-\binom{d-1}{2}}(G(k,n))$ has 2 connected components.
\end{theorem}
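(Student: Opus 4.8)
The plan is to exhibit two irreducible, mutually disjoint closed subsets of the Hilbert scheme that together exhaust it. Since
\[
dT+1-\binom{d-1}{2}=\binom{T+2}{2}-\binom{T+2-d}{2},
\]
a closed subscheme $X\subset G(k,n)$ with this Hilbert polynomial (computed in the Plücker polarization) is a curve of degree $d$ with arithmetic genus $\binom{d-1}{2}$, so its expected members are degree-$d$ divisors on linearly embedded $2$-planes of $G(k,n)$. Every such plane is of exactly one of two kinds: an \emph{$\alpha$-plane} $\{\,W:A\subseteq W\subseteq B\,\}$ with $\dim A=k-1$, $\dim B=k+2$, or a \emph{$\beta$-plane} $\{\,W:A\subseteq W\subseteq B\,\}$ with $\dim A=k-2$, $\dim B=k+1$; they are distinguished by $\dim\bigcap_{W}W\in\{k-1,k-2\}$, both kinds exist precisely when $1<k<n-1$, and the flag varieties parametrizing the pairs $(A,B)$ are irreducible. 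I would then set $W_\alpha$ (resp.\ $W_\beta$) to be the closure in $Hilb_{dT+1-\binom{d-1}{2}}(G(k,n))$ of the locus of degree-$d$ divisors lying on an $\alpha$-plane (resp.\ a $\beta$-plane). Each is the closure of the image of a $\mathbb{P}^{\binom{d+2}{2}-1}$-bundle over the corresponding irreducible flag variety, hence irreducible, and each is nonempty under the stated hypotheses on $k$.

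The first claim is that $W_\alpha$ and $W_\beta$ exhaust the Hilbert scheme. Let $X$ have the given Hilbert polynomial. If $X$ were disconnected, additivity of arithmetic genus together with the classical bound $p_a\le\binom{e-1}{2}$ for a degree-$e$ curve in projective space would give $p_a(X)<\binom{d-1}{2}$, a contradiction; so $X$ is connected. If $X$ had an embedded point, removing it would produce a pure $1$-dimensional subscheme of the same degree with strictly larger $p_a$, again forcing $p_a(X)<\binom{d-1}{2}$; so $X$ is pure of dimension $1$. Thus $X$ is a connected pure curve of degree $d$ realizing the maximal genus $\binom{d-1}{2}$, hence, by the classical characterization of the extremal case, a plane curve: its Plücker span $\Pi$ is a $2$-plane. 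Because $X$ has degree $d\ge 3$ it cannot be contained in a conic; but if $\Pi$ were not contained in $G(k,n)$, some Plücker quadric would restrict nontrivially to $\Pi$ and thus cut out a conic containing $X$. Hence $\Pi\subseteq G(k,n)$, so $\Pi$ is an $\alpha$- or a $\beta$-plane, $X$ is a degree-$d$ divisor on it, and $X\in W_\alpha\cup W_\beta$.

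The second claim is that $W_\alpha\cap W_\beta=\emptyset$. Since the $\alpha$-planes form a projective family and ``being contained in an $\alpha$-plane'' is a closed condition on the Hilbert scheme, every member of $W_\alpha$ is contained in some $\alpha$-plane; likewise for $W_\beta$ and $\beta$-planes. A point $X$ of the intersection would therefore lie in an $\alpha$-plane $\Pi_\alpha$ and a $\beta$-plane $\Pi_\beta$; these are distinct $2$-planes (they have different $\dim\bigcap_{W}W$), so $\Pi_\alpha\cap\Pi_\beta$ is a proper linear subspace of $\Pi_\alpha$, of dimension at most $1$, and $X$ would be a degree-$d$ curve inside a line — impossible for $d\ge 3$. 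Combining the two claims, $Hilb_{dT+1-\binom{d-1}{2}}(G(k,n))=W_\alpha\sqcup W_\beta$ is a disjoint union of two nonempty irreducible (in particular connected) closed, hence also open, subsets, and so has exactly two connected components. I expect the main work to lie in the ``at most two'' direction — namely the input that a connected curve of degree $d$ attaining genus $\binom{d-1}{2}$ must be planar, which has to be applied to possibly reducible and non-reduced curves — while the disjointness and the bookkeeping with $\alpha$- versus $\beta$-planes are comparatively formal. Finally, one records that since $H_2(G(k,n))=\mathbb{Z}$ every degree-$d$ curve in $G(k,n)$ carries the same class $d[\ell]$, so members of $W_\alpha$ and of $W_\beta$ indeed have equal cohomology class, as asserted in the abstract.
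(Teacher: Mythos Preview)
Your argument is correct in outline and reaches the same conclusion, but it takes a genuinely different route from the paper for the crucial step of showing that every point of the Hilbert scheme is a degree-$d$ divisor on a $2$-plane of $G(k,n)$. You argue curve-theoretically: rule out embedded points and disconnectedness via genus inequalities, then invoke the extremal case of the Castelnuovo-type bound $p_a\le\binom{d-1}{2}$ to force planarity. The paper instead computes, for an arbitrary plane curve $I=(f,x_{m+1},\dots,x_N)$, that $\dim \mathrm{Hom}(I,S/I)$ equals the dimension of the natural parameter space $\mathbb{P}(\mathrm{Sym}^d\mathcal{S}^*)\to\mathbb{G}(2,N)$; since $\mathrm{Hilb}_{P_d}(\mathbb{P}^N)$ is connected by Hartshorne, smoothness along this locus forces it to be the whole Hilbert scheme. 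The paper's approach has the advantage of being entirely self-contained and of working verbatim for the higher-dimensional analogues $P_{d,m}$; your approach is more geometric but leans on the extremal genus characterization for \emph{arbitrary} one-dimensional subschemes (non-reduced, reducible), which---as you rightly flag---is where the real content lies and deserves a precise reference.

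For the disjointness of the two pieces the approaches also differ slightly. The paper passes to the unique containing plane and tracks its Schubert class under a connecting flat family (Lemma~4.1): since $\sigma_{n-k,\dots,n-k,n-k-2}\ne\sigma_{n-k,\dots,n-k,n-k-1,n-k-1}$, the two loci cannot meet. Your argument is more direct and just as valid: the plane is the linear span of the curve (hence unique once $d\ge2$), and an $\alpha$-plane and a $\beta$-plane are distinct, so their intersection is at most a line, which cannot contain a degree-$d$ curve. One small cosmetic point: since the map from the $\mathbb{P}^{\binom{d+2}{2}-1}$-bundle over each flag variety to the Hilbert scheme is proper and (by your exhaustion argument) surjects onto the corresponding locus, $W_\alpha$ and $W_\beta$ are already closed, so taking closures is unnecessary.
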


We argue by identifying the Hilbert scheme with the space of degree $d$ planar curves (\textit{i.e.} degree $d$ hypersurfaces in $\mathbb{P}^2_{\mathbb{C}}$) in the Grassmannian $G(k,n)$. For simplicity, let
\begin{center}
$P_d(T) := dT+1-\binom{d-1}{2} \in \mathbb{C}[T]$
\end{center}
 We embed the Grassmannian into a projective space $\mathbb{P}^N_{\mathbb{C}}$ via the Pl\"{u}cker embedding and identify $Hilb_{P_d(T)}(\mathbb{P}^N_{\mathbb{C}})$ with the degree $d$ planar curves in $\mathbb{P}^N_{\mathbb{C}}$ by looking at the dimension of tangent spaces at the points of degree $d$ planar curves. Since the dimension of tangent spaces at the points of degree $d$ planar curves in $Hilb_{P_d(T)}(\mathbb{P}^N_{\mathbb{C}})$ agrees with the dimension of the space of degree $d$ planar curves in $\mathbb{P}^N_{\mathbb{C}}$, this tells us that the space of the degree $d$ planar curves in $Hilb_{P_d(T)}(\mathbb{P}^N_{\mathbb{C}})$ is smooth everywhere. By Hartshorne's theorem [1] $Hilb_{P_d(T)}(\mathbb{P}^N_{\mathbb{C}})$ is smooth and connected. Hence, $Hilb_{P_d(T)}(G(k,n))$ consists of degree $d$ planar curves contained in $G(k,n)$. If $d \geq 3$, the plane spanned by the curve lies in $G(k,n)$ as well. The disconnectivity can be proved by using the cohomology classes of the planes in the Grassmannian. 

One of the most interesting properties in Theorem 1.1. is that the curves in both components have the same cohomology class $d \sigma_{n-k,\cdots,n-k,n-k-1}$, which makes it hard to distinguish these two components. However, if we use the same technique and generalize it to the space of degree $d$ hypersurfaces in $\mathbb{P}^m_{\mathbb{C}}$ in the Grassmannian $G(k,n)$, we can distinguish each components by looking at their cohomology classes.

\begin{theorem}
For $d \geq 3, 1<k<n-1$ and $m>2$, let's define  $P_{d,m}(T) = \binom{T+m}{m}-\binom{T+m-d}{m}$. Then the $Hilb_{P_{d,m}(T)}(G(k,n))$ has the following number of connected components :
\begin{center}
$\begin{array}{ll}
2< m \leq k \text{ case} & : \big(\text{number of components in } Hilb_{P_{d,m}(T)}(G(k,n))\big) =2 \\
k < m \leq n-k \text{ case} & : \big(\text{number of components in } Hilb_{P_{d,m}(T)}(G(k,n))\big) =1 \\
n-k< m \text{ case} & : \big(\text{number of components in } Hilb_{P_{d,m}(T)}(G(k,n))\big) =0 \\
\end{array}$
\end{center}
\end{theorem}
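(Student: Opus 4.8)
The plan is to carry out the argument sketched for Theorem 1.1 with $\mathbb{P}^m$ in place of $\mathbb{P}^2$. Plücker-embed $G(k,n)\hookrightarrow \mathbb{P}^N$ with $N=\binom nk-1$, and let $U\subseteq Hilb_{P_{d,m}(T)}(\mathbb{P}^N)$ be the locus of degree $d$ hypersurfaces that lie in some linear $m$-plane $\Lambda\subseteq\mathbb{P}^N$. First I would identify $U$ with the image of the projective bundle over $G(m+1,N+1)$ whose fibre over $\Lambda$ is $\mathbb{P}(H^0(\Lambda,\mathcal{O}_\Lambda(d)))$: the induced morphism to $Hilb_{P_{d,m}(T)}(\mathbb{P}^N)$ is proper, and injective on points because for $\deg f\ge 1$ the linear span of $V(f)$ equals $\Lambda$, so $U$ is closed, irreducible, of dimension $(m+1)(N-m)+\binom{m+d}{m}-1$. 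Then I would prove that $Hilb_{P_{d,m}(T)}(\mathbb{P}^N)$ is smooth along $U$: for $Z=V(f)\subseteq\Lambda$ the normal-bundle sequence $0\to N_{Z/\Lambda}\to N_{Z/\mathbb{P}^N}\to N_{\Lambda/\mathbb{P}^N}|_Z\to 0$ has $N_{Z/\Lambda}=\mathcal{O}_Z(d)$ and $N_{\Lambda/\mathbb{P}^N}|_Z=\mathcal{O}_Z(1)^{\oplus(N-m)}$, and the vanishings $H^1(\mathbb{P}^m,\mathcal{O}(1-d))=0$ (valid since $m\ge 2$) and $H^1(\mathbb{P}^m,\mathcal{O})=0$ together with $H^0(\mathbb{P}^m,\mathcal{O}(1-d))=0$ (valid since $d\ge 2$) give $h^0(N_{Z/\mathbb{P}^N})\le\big(\binom{m+d}{m}-1\big)+(N-m)(m+1)=\dim U$; since the tangent space at $[Z]$ has dimension at least $\dim_{[Z]}Hilb\ge\dim U$, this is an equality and the Hilbert scheme is smooth at $[Z]$. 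As $U$ is closed and is cut out smoothly in the expected dimension it is also open, hence clopen, so by Hartshorne's connectedness theorem [1] we get $Hilb_{P_{d,m}(T)}(\mathbb{P}^N)=U$: every subscheme of $\mathbb{P}^N$ with Hilbert polynomial $P_{d,m}(T)$ is a degree $d$ hypersurface in an $m$-plane.

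Next, exactly as in the $m=2$ case, the Plücker relations promote this to $G(k,n)$. If $Z=V(f)\subseteq\Lambda$ with $\deg f=d\ge 3$ and $Z\subseteq G(k,n)$, then the homogeneous ideal of $\Lambda\cap G(k,n)$ inside $\Lambda$ is generated by quadrics (restrictions of the Plücker quadrics) and is contained in $(f)$, which has no nonzero form of degree $<d$; hence that ideal vanishes and $\Lambda\subseteq G(k,n)$. So $Hilb_{P_{d,m}(T)}(G(k,n))$ is precisely the space of degree $d$ hypersurfaces inside $m$-planes $\Lambda\subseteq G(k,n)$. I would then invoke the classification of linear subspaces of the Grassmannian from Section 2: for $m\ge 2$ such a plane $\Lambda$ is of exactly one of two types, $\alpha$-type $\{V:A\subseteq V\subseteq W\}$ with $\dim A=k-1$, $\dim W=k+m$ (which exists iff $m\le n-k$), or $\beta$-type $\{V:A\subseteq V\subseteq B\}$ with $\dim A=k-m$, $\dim B=k+1$ (which exists iff $m\le k$); and for $m\ge 2$ these two families are disjoint.

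This gives a decomposition $Hilb_{P_{d,m}(T)}(G(k,n))=H^{\alpha}\sqcup H^{\beta}$, where $H^{\alpha}$ (resp. $H^{\beta}$) is the locus of degree $d$ hypersurfaces in $\alpha$-type (resp. $\beta$-type) planes; $H^{\alpha}$ is the image of a projective bundle over the partial flag variety $\mathrm{Fl}(k-1,k+m;n)$ and $H^{\beta}$ over $\mathrm{Fl}(k-m,k+1;n)$, so each is closed and irreducible (hence connected) whenever it is nonempty. Using the isomorphism $G(k,n)\cong G(n-k,n)$ to reduce to $k\le n-k$, I then read off the count: for $2<m\le k$ both $H^{\alpha}$ and $H^{\beta}$ are nonempty, disjoint, closed and irreducible, so they are exactly the two connected components; for $k<m\le n-k$ only $H^{\alpha}$ is nonempty, giving one component; for $n-k<m$ neither type of plane exists, so $Hilb_{P_{d,m}(T)}(G(k,n))=\emptyset$. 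Finally, to recover the remark that for $m>2$ the two components are separated by cohomology class (unlike $m=2$), I would compute by Schubert calculus that a degree $d$ hypersurface in an $\alpha$-plane has class $d\,\sigma_{\lambda^\alpha}$ with $\lambda^\alpha=(n-k,\dots,n-k,n-k-m+1)$ ($k-1$ entries equal to $n-k$), while one in a $\beta$-plane has class $d\,\sigma_{\lambda^\beta}$ with $\lambda^\beta=(n-k,\dots,n-k,n-k-1,\dots,n-k-1)$ ($k-m+1$ entries $n-k$ and $m-1$ entries $n-k-1$); these two partitions coincide exactly when $m=2$, which is precisely why Theorem 1.1 cannot distinguish its two components.

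The step I expect to be the main obstacle is the first one: showing the tangent-space bound $h^0(N_{Z/\mathbb{P}^N})\le\dim U$ is sharp, which is where the hypotheses $m>2$ (really $m\ge 2$) and $d\ge 3$ enter through cohomology vanishing on $\mathbb{P}^m$, and on which the whole identification $Hilb_{P_{d,m}(T)}(\mathbb{P}^N)=U$ depends. The Plücker-quadric reduction and the classification of linear spaces on $G(k,n)$ are standard, so once that identification is established the rest is bookkeeping.
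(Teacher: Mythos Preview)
Your proposal is correct and follows essentially the same strategy as the paper: identify $Hilb_{P_{d,m}(T)}(\mathbb{P}^N)$ with degree $d$ hypersurfaces in $m$-planes via a tangent-space count plus Hartshorne connectedness, use $d\ge 3$ and the Pl\"ucker quadrics to force the ambient $m$-plane into $G(k,n)$, and then classify linear $m$-spaces on the Grassmannian. The only cosmetic differences are that you compute the tangent space via the normal-bundle sequence rather than directly as $Hom(I,S/I)$, and that you package the component description as projective bundles over flag varieties (the paper does this separately in Theorem~3.3); also, your ``$\deg f\ge 1$'' in the injectivity step should read $\deg f\ge 2$, but this is harmless since $d\ge 3$.
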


In \S 2, we give preliminaries of the Grassmannians and the tangent space of the Hilbert schemes. In \S 3, we prove Theorem 1.2 when $m>2$ and $d \geq 3$. In \S 4, we prove Theorem 1.1 and complete the proof of Theorem 1.2.

\section{Preliminaries}

In this section, we collect basic facts concerning Grassmannians and the tangent space of the Hilbert scheme. Throughout this paper we work over $\mathbb{C}$.

\subsection{The Pl\"{u}cker embedding}
Let $1 \leq k <n$ be integer. Let $G(k,n)$ denote the Grassmannian of $k$-dimensional subspaces in a $n$-dimensional vector space.  We can embed the Grassmannian $G(k,n)$ into projective space $\mathbb{P}^N_{\mathbb{C}}$, where $N= \binom{n}{k}-1$, by the \textit{Pl\"{u}cker embedding}. 

For $1<k<n-1$, the ideal of the image is generated by quadratic polynomials called \textit{Pl\"{u}cker relations} [3].

\subsection{Cohomology classes in Grassmannian}
The cohomology of $G(k,n)$ has an additive basis given by Schubert classes.

Assume that $1<k<n-1$ (otherwise $G(k,n) \cong \mathbb{P}^{n-1}_{\mathbb{C}}$), and take a complete flag $\mathbb{F}_{\bullet}$ in $\mathbb{C}^n$, which is a sequence of subspaces in $\mathbb{C}^n$
\begin{center}
$0 \subset F_1 \subset F_2 \subset \cdots \subset F_n = \mathbb{C}^n$
\end{center}
with $\dim F_i =i$. Also pick a sequence of integers $a=(a_1,\cdots,a_k)$ which satisfies
\begin{center}
$n-k \geq a_1 \geq a_2 \geq \cdots \geq a_k \geq 0$.
\end{center}
Then, the \textit{Schubert variety} $\Sigma_a(F_{\bullet}) $ is defined by
\begin{center}
$\Sigma_a(F_{\bullet}) := \{ V \in G(k,n) : \dim (V \cap F_{n-k+i-a_i}) \geq i \text{ for } 1\leq i \leq k \}$.
\end{center}
We denoted the cohomology class of $\Sigma_a(F_{\bullet})$ by
\begin{center}
$\sigma_a := [\Sigma_a(F_{\bullet})]$,
\end{center}
such a class is called a \textit{Schubert class}, and this class does not depend on $F_{\bullet}$. The Chow ring of Grassmannian $G(k,n)$ is generated by Schubert classes, and the cohomology class of any subvariety of $G(k,n)$ can be expressed as a positive linear combination of Schubert classes.

\subsection{Tangent space of Hilbert scheme}
Let $X$ be a scheme and let $Y$ be a closed subscheme of $X$. Let $P(T)$ be the Hilbert polynomial of $Y$. Then the tangent space of Hilbert scheme $Hilb_{P(T)}(X)$ at the subscheme $Y$ is $H^0(\mathcal{N}_{Y/X})$ (by [4]). In particular, $\mathcal{N}_{Y/X}$ is the normal sheaf to a closed subscheme $Y$
\begin{center}
$\mathcal{N}_{Y/X}=Hom_{\mathcal{O}_Y}(\mathcal{I}/\mathcal{I}^2,\mathcal{O}_Y) = Hom_{\mathcal{O}_X}(\mathcal{I},\mathcal{O}_Y)$
\end{center}
where $\mathcal{I}=\mathcal{I}_{Y/X}$ is the ideal sheaf of $Y$ in $X$. Therefore, if $X= \mathbb{P}^N_{\mathbb{C}}$ and $Y$ is a subvariety defined by a homogeneous ideal $I$, then tangent space of the Hilbert scheme at $Y$ is $Hom_{S}(I,S/I)$, where $S=\mathbb{C}[x_0,\cdots,x_n]$.

\section{Disconnectivity of $Hilb_{P_{d,m}(T)}(G(k,n))$ for $m>2,d\geq 3$}
In this section, we prove Theorem 1.2.

If $X$ is a degree $d$ hypersurface in $\mathbb{P}^m_{\mathbb{C}}$, the Hilbert polynomial of $X$ is $P_{d,m}(T)$, where
\begin{center}
$P_{d,m}(T) = \binom{T+m}{m}-\binom{T+m-d}{m}$.
\end{center}

Hence, $Hilb_{P_{d,m}(T)}(G(k,n))$ contains the space of degree $d$ hypersurfaces of $\mathbb{P}^m_{\mathbb{C}}$ contained in $G(k,n)$. We want to show that every point of $Hilb_{P_{d,m}(T)}(G(k,n))$ is of this form.

\begin{theorem}
There is a one-to-one correspondence between the space of degree $d$ hypersurfaces of $\mathbb{P}^m_{\mathbb{C}}$ in $\mathbb{P}^N_{\mathbb{C}}$ and the Hilbert scheme $Hilb_{P_{d,m}(T)}(\mathbb{P}^N_{\mathbb{C}})$, where $2\leq m\leq N$.
\end{theorem}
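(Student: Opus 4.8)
The plan is to show that any subscheme $Y \subset \mathbb{P}^N_{\mathbb{C}}$ with Hilbert polynomial $P_{d,m}(T)$ must be a degree $d$ hypersurface inside some $m$-plane, and conversely that every such hypersurface has this Hilbert polynomial (the easy direction, already noted in the text). The main work is the forward direction. First I would observe that $P_{d,m}(T)$ has degree $m$ with leading coefficient $d/m!$, so any $Y$ realizing it is an $m$-dimensional subscheme of degree $d$. The strategy is then to control the linear span of $Y$: I claim $\langle Y \rangle$ is exactly an $m$-plane $\Lambda \cong \mathbb{P}^m_{\mathbb{C}}$, and that $Y$ is a hypersurface of degree $d$ in $\Lambda$.

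The key step is a span/degree estimate. For an $m$-dimensional subscheme of degree $d$ spanning a projective space of dimension $r$, there is a classical lower bound relating $r$, $m$, and $d$ (a variety of dimension $m$ and degree $d$ spans at most a $\mathbb{P}^{d+m-1}$, with equality for rational normal scrolls/varieties of minimal degree; more to the point here, comparing $P_{d,m}$ with the Hilbert polynomial of a subscheme of $\mathbb{P}^r$ of dimension $m$). Concretely, I would compare $h^0(\mathcal{O}_Y(1))$ with $h^0(\mathcal{O}_{\mathbb{P}^N}(1)) = N+1$ restricted to $\langle Y\rangle$: from the exact sequence $0 \to \mathcal{I}_Y(1) \to \mathcal{O}_{\mathbb{P}^N}(1) \to \mathcal{O}_Y(1) \to 0$ and the fact that $P_{d,m}(1) = \binom{1+m}{m} - \binom{1+m-d}{m}$, a short computation pins down $h^0(\mathcal{O}_Y(1))$ for $Y$ with this Hilbert polynomial once one knows $H^1(\mathcal{I}_Y(1)) = 0$, which in turn follows because $\mathbb{P}^m$-hypersurfaces are arithmetically Cohen–Macaulay and the Hilbert scheme point deforms within a single component of $Hilb_{P_{d,m}}(\mathbb{P}^N)$; alternatively one argues directly that $Y$ is nondegenerate in its span and of minimal degree there, forcing $\langle Y\rangle = \mathbb{P}^m$. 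Having fixed the span to be an $m$-plane $\Lambda$, $Y \subset \Lambda$ is an $m$-dimensional subscheme of $\mathbb{P}^m$ of degree $d$, hence is cut out by a single degree $d$ form (its ideal in the polynomial ring of $\Lambda$ is principal since $Y$ is a divisor on the smooth $\mathbb{P}^m$, and the generator has degree $d$ by the degree count). This gives the hypersurface, and the correspondence $Y \leftrightarrow (\Lambda, F)$ is then manifestly bijective.

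I expect the main obstacle to be ruling out degenerate or non-reduced configurations that happen to share the Hilbert polynomial $P_{d,m}(T)$ — e.g. a hypersurface in an $m$-plane with embedded or lower-dimensional components attached, or an $m$-dimensional scheme of degree $d$ spanning a larger projective space. Handling the first requires showing that the Hilbert polynomial $P_{d,m}(T)$ forces purity (no embedded points): one compares the actual Hilbert \emph{function} to the polynomial and uses that the difference detects such components, or invokes that $Hilb_{P_{d,m}}(\mathbb{P}^N)$ is irreducible (being dominated by the incidence variety of pairs $(\Lambda, F)$, which is a projective bundle over $G(m,N)$ and hence irreducible of the expected dimension) so that every point specializes from the generic smooth hypersurface and is therefore itself a (flat limit) hypersurface — but a flat limit of degree $d$ hypersurfaces in varying $m$-planes, with the Hilbert polynomial unchanged, is again such a hypersurface by upper-semicontinuity of fiber dimension and the span estimate. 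Ruling out the second (larger span) is the degree-bound step above. I would organize the write-up as: (1) compute $\dim Y = m$, $\deg Y = d$; (2) show $\langle Y \rangle$ is an $m$-plane via the span bound; (3) conclude $Y$ is principal of degree $d$ in that plane; (4) check the correspondence is a bijection of the underlying sets, and if a scheme-theoretic statement is intended, that the incidence variety maps isomorphically onto $Hilb_{P_{d,m}}(\mathbb{P}^N)$ by comparing tangent spaces as in \S 2.3.
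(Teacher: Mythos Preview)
Your approach is genuinely different from the paper's, and as written it has a real gap.

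First a small but telling slip: $P_{d,m}(T)=\binom{T+m}{m}-\binom{T+m-d}{m}$ has degree $m-1$ in $T$ with leading coefficient $d/(m-1)!$, not degree $m$; so $Y$ is $(m-1)$-dimensional of degree $d$, which is consistent with being a hypersurface in $\mathbb{P}^m$. Your later sentence ``an $m$-dimensional subscheme of $\mathbb{P}^m$'' shows this was not just a typo.

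The substantive gap is the span step. You want $\langle Y\rangle=\mathbb{P}^m$, and you propose to get it from $h^0(\mathcal{O}_Y(1))=P_{d,m}(1)=m+1$; but equality of Hilbert function and Hilbert polynomial in degree $1$ is exactly the vanishing $H^1(\mathcal{I}_Y(1))=0$, and your justification for that vanishing (``$\mathbb{P}^m$-hypersurfaces are ACM'') presupposes what you are trying to prove. The alternative you offer, the minimal-degree bound $\deg Y\ge 1+\operatorname{codim}_{\langle Y\rangle}Y$, only gives $\dim\langle Y\rangle\le d+m-2$, which for $d\ge 3$ does not force $\langle Y\rangle=\mathbb{P}^m$. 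Your third fallback, that the incidence variety $\mathbb{P}(\mathrm{Sym}^d\mathcal{S}^*)\to\mathbb{G}(m,N)$ dominates $Hilb_{P_{d,m}}(\mathbb{P}^N)$ and is irreducible, only shows its \emph{image} is irreducible; it does not by itself rule out a second component of the Hilbert scheme disjoint from (or meeting) that image. So none of the three routes you sketch actually closes the argument.

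The paper sidesteps all of this. It never classifies $Y$ directly. Instead it computes, for every ideal $I=(f,x_{m+1},\dots,x_N)$ coming from an honest hypersurface, the tangent space $\operatorname{Hom}_S(I,S/I)$ and finds its dimension equals $(m+1)(N-m)+\binom{m+d}{m}-1$, the dimension of the incidence bundle $\mathbb{P}(\mathrm{Sym}^d\mathcal{S}^*)$. Hence the hypersurface locus is a smooth irreducible component of $Hilb_{P_{d,m}}(\mathbb{P}^N)$. Now Hartshorne's connectedness theorem is invoked: the Hilbert scheme is connected, so any other component would have to meet this one, but a point lying on two components is singular, contradicting smoothness. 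Therefore there are no other components. This is exactly the tangent-space comparison you relegated to a parenthetical upgrade in your step (4); in the paper it is the whole proof, and it replaces the span and purity arguments you were unable to complete.
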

\begin{proof}
Let $\mathcal{S}$ be the universal subbundle on $\mathbb{G}(m,N)$. By considering $\mathbb{P}(Sym^d \mathcal{S}^*)$ as a bundle over $\mathbb{G}(m,N)$, each fiber is isomorphic to the space of degree $d$ hypersurfaces in $\mathbb{P}^m_{\mathbb{C}}$. Therefore, by the theorem of the dimension of fibers, we have that
\begin{center}
$\dim_{\mathbb{C}} \mathbb{P}(Sym^d \mathcal{S}^*) = (m+1)(N-m) + \binom{m+d}{m}-1$
\end{center}
and this is irreducible and smooth. Now if we take an arbitrary point in the image of $\mathbb{P}(Sym^d \mathcal{S}^*)$ inside of the Hilbert scheme $Hilb_{P_{d,m}(T)}(\mathbb{P}^N_{\mathbb{C}})$, the point can be expressed as an ideal $I=(f(x_0,\cdots,x_m),x_{m+1},\cdots,x_N)$ in $\mathbb{C}[x_0,\cdots,x_N]$ (up to linear automorphism on $\mathbb{C}[x_0,\cdots,x_N]$). For this point $I$, the tangent space of the Hilbert scheme at $I$ is equal to $Hom(I, S/I)$, where $S=\mathbb{C}[x_0,\cdots,x_N]$. Since $S/I \cong \mathbb{C}[x_0,\cdots,x_m]/(f)$ and this space is the set of graded $S$-module homomorphisms, each morphism can be determined by the image of $f$ and $x_{n+1},\cdots,x_N$. Since $f$ has to map to a degree $d$ homogeneous polynomial in $\mathbb{C}[x_0,\cdots,x_m]/(f)$, this implies $\binom{m+d}{m}-1$ dimensional choices, because the map $f \mapsto \overline{f}$ is the same as the zero map. And for $x_{m+1},\cdots, x_N$, each of $x_i$ should map to a linear homogeneous polynomial  in $\mathbb{C}[x_0,\cdots,x_m]/(f)$, and this gives $(N-m)(m+1)$ dimensional choices. As a result, we compute that
\begin{center}
$\dim_{\mathbb{C}} T_{I}\Big(Hilb_{P_{d,m}(T)}(\mathbb{P}^N_{\mathbb{C}})\Big) = \binom{m+d}{m}-1+(N-m)(m+1)$
\end{center}
Therefore, the dimension of the tangent space of Hilbert scheme agrees with the dimension of $\mathbb{P}(Sym^d \mathcal{S}^*)$. 

In other words, the subspace of degree $d$ hypersurfaces of $\mathbb{P}^m_{\mathbb{C}}$ in the Hilbert scheme is smooth everywhere. If we assume that there is a subscheme $X$ in $\mathbb{P}^N_{\mathbb{C}}$ which is not a degree $d$ hypersurface of $\mathbb{P}^m_{\mathbb{C}}$ but has Hilbert polynomial $P_{d,m}(T)$, there would be a component $M$ in $Hilb_{P_{d,m}(T)}(\mathbb{P}^N_{\mathbb{C}})$ containing $\mathbb{I}(X)$. By, Hartshorne's theorem the Hilbert scheme $Hilb_{P_{d,m}(T)}(\mathbb{P}^N_{\mathbb{C}})$ is connected. Therefore, some component of $Hilb_{P_{d,m}(T)}(\mathbb{P}^N_{\mathbb{C}})$ and the component of the degree $d$ hypersurfaces of $\mathbb{P}^m_{\mathbb{C}}$ must intersect. However, we have proved that the component of degree $d$ hypersurfaces of $\mathbb{P}^m_{\mathbb{C}}$ is smooth everywhere. This is impossible since any intersection of 2 components would be singular. Therefore,  $Hilb_{P_{d,m}(T)}(\mathbb{P}^N_{\mathbb{C}})$ is irreducible and is exactly the same as space of degree $d$ hypersurfaces of $\mathbb{P}^m_{\mathbb{C}}$ contained in $\mathbb{P}^N_{\mathbb{C}}$.
\end{proof}

Recall that $P_d(T) = P_{d,2}(T)$ and hypersurface of $\mathbb{P}^2_{\mathbb{C}}$ is a curve. We get the following corollary.

\begin{corollary}
There is a one-to-one correspondence between the space of degree $d$ planar curves in $\mathbb{P}^N_{\mathbb{C}}$ and the Hilbert scheme $Hilb_{P_{d}(T)}(\mathbb{P}^N_{\mathbb{C}})$, where $2\leq N$.
\end{corollary}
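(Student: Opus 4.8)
The plan is to obtain this statement as the special case $m=2$ of Theorem 3.1. By definition $P_d(T) = P_{d,2}(T)$, and a degree $d$ hypersurface in $\mathbb{P}^2_{\mathbb{C}}$ is precisely a degree $d$ planar curve, so the ``space of degree $d$ planar curves in $\mathbb{P}^N_{\mathbb{C}}$'' is exactly the space appearing in Theorem 3.1 when $m=2$. The hypothesis $2 \leq m \leq N$ of Theorem 3.1 specializes to $2 \leq 2 \leq N$, i.e.\ $N \geq 2$, which is exactly the hypothesis of the corollary. Hence the one-to-one correspondence is immediate once Theorem 3.1 is in hand.

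The one point that deserves a remark is the boundary case $N = 2$, where the Grassmann bundle $\mathbb{G}(2,N)$ used in the proof of Theorem 3.1 collapses to a single point (there is only one $\mathbb{P}^2_{\mathbb{C}}$ inside $\mathbb{P}^2_{\mathbb{C}}$). I would check that the dimension bookkeeping still reads correctly: the formula $(m+1)(N-m) + \binom{m+d}{m} - 1$ becomes $\binom{d+2}{2}-1$, which is the dimension of the linear system of degree $d$ plane curves, and in the tangent space computation there are simply no variables $x_{m+1},\dots,x_N$, so the $(N-m)(m+1)$ contribution vanishes. Thus the equality of the tangent space dimension with $\dim_{\mathbb{C}} \mathbb{P}(Sym^d \mathcal{S}^*)$ persists, and no separate argument is needed for this edge case.

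I do not expect any genuine obstacle here: all the content lies in Theorem 3.1, and the corollary is a pure specialization. If one prefers a self-contained version, one would repeat the proof of Theorem 3.1 verbatim with $m$ replaced by $2$: form $\mathbb{P}(Sym^d \mathcal{S}^*)$ over $\mathbb{G}(2,N)$, observe it is irreducible and smooth of the stated dimension, compute $\dim_{\mathbb{C}} Hom_{S}(I, S/I)$ at a point $I=(f(x_0,x_1,x_2),x_3,\dots,x_N)$ and see that it matches, and then invoke Hartshorne's connectedness of $Hilb_{P_d(T)}(\mathbb{P}^N_{\mathbb{C}})$ together with the fact that two distinct irreducible components must meet along a singular locus, forcing irreducibility.
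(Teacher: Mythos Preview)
Your proposal is correct and matches the paper's approach exactly: the paper derives this corollary simply by observing that $P_d(T)=P_{d,2}(T)$ and that a degree $d$ hypersurface in $\mathbb{P}^2_{\mathbb{C}}$ is a planar curve, so the result is the $m=2$ case of Theorem~3.1. Your additional remarks on the boundary case $N=2$ are not in the paper but are harmless clarifications.
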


Moreover, for arbitrary subvarieties $X$ in $\mathbb{P}^N_{\mathbb{C}}$, then Theorem 3.1. implies that the space of degree $d$ hypersurfaces of $\mathbb{P}^m_{\mathbb{C}}$ in $X$ agrees with the Hilbert scheme $Hilb_{P_{d,m}(T)}(X)$. Since we can embed the Grassmannian in to a projective space via the Pl\"{u}cker embedding, we get the following results.

\begin{corollary}
There is a one-to-one correspondence between the space of degree $d$ hypersurfaces of $\mathbb{P}^m_{\mathbb{C}}$ in $G(k,n)$ and the Hilbert scheme $Hilb_{P_{d,m}(T)}(G(k,n))$.
\end{corollary}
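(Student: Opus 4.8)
The plan is to read this off from Theorem 3.1 together with the standard fact that the Hilbert scheme of a closed subscheme is a closed subscheme of the Hilbert scheme of the ambient space. Embed $G(k,n)$ in $\mathbb{P}^N_{\mathbb{C}}$, $N=\binom{n}{k}-1$, by the Pl\"{u}cker embedding. For any closed subscheme $X\subseteq\mathbb{P}^N_{\mathbb{C}}$ and any polynomial $P(T)$, the functor of flat families of closed subschemes of $X$ with Hilbert polynomial $P(T)$ is represented by a scheme $Hilb_{P(T)}(X)$ together with a closed immersion $Hilb_{P(T)}(X)\hookrightarrow Hilb_{P(T)}(\mathbb{P}^N_{\mathbb{C}})$, whose image on $\mathbb{C}$-points is the locus of those subschemes $Y\subseteq\mathbb{P}^N_{\mathbb{C}}$, with Hilbert polynomial $P(T)$, that are contained in $X$. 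So I would first record that $Hilb_{P_{d,m}(T)}(G(k,n))$ is the closed sublocus of $Hilb_{P_{d,m}(T)}(\mathbb{P}^N_{\mathbb{C}})$ carved out by the condition ``contained in $G(k,n)$''.

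Next I would apply Theorem 3.1, which identifies $Hilb_{P_{d,m}(T)}(\mathbb{P}^N_{\mathbb{C}})$ --- as a scheme, by the tangent-space computation in its proof --- with the space $\mathbb{P}(Sym^d\mathcal{S}^*)$ over $\mathbb{G}(m,N)$ of degree $d$ hypersurfaces of linearly embedded $\mathbb{P}^m_{\mathbb{C}}$'s in $\mathbb{P}^N_{\mathbb{C}}$. Restricting along the closed immersion above then identifies $Hilb_{P_{d,m}(T)}(G(k,n))$ with the closed sublocus of $\mathbb{P}(Sym^d\mathcal{S}^*)$ consisting of those hypersurfaces $H$ with $H\subseteq G(k,n)$, which is by definition the space of degree $d$ hypersurfaces of $\mathbb{P}^m_{\mathbb{C}}$ contained in $G(k,n)$; this already yields the stated one-to-one correspondence (indeed an isomorphism of schemes). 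On $\mathbb{C}$-points it unwinds as follows: every point of $Hilb_{P_{d,m}(T)}(G(k,n))$ is a closed subscheme of $G(k,n)\subseteq\mathbb{P}^N_{\mathbb{C}}$ with Hilbert polynomial $P_{d,m}(T)$, hence by Theorem 3.1 a degree $d$ hypersurface of some linearly embedded $\mathbb{P}^m_{\mathbb{C}}\subseteq\mathbb{P}^N_{\mathbb{C}}$, and this hypersurface lies in $G(k,n)$; conversely any degree $d$ hypersurface of a $\mathbb{P}^m_{\mathbb{C}}$ that is contained in $G(k,n)$ determines such a point. Observe that only the hypersurface, and not its ambient $\mathbb{P}^m_{\mathbb{C}}$, is required to lie in $G(k,n)$; that for $d\geq 3$ the ambient $\mathbb{P}^m_{\mathbb{C}}$ is in fact forced into $G(k,n)$ because $G(k,n)$ is cut out by quadrics is a separate point, used only in the later sections.

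I do not expect a substantive obstacle here: once Theorem 3.1 is available, the corollary is essentially formal, and the only point requiring attention is bookkeeping --- making sure Theorem 3.1 is used scheme-theoretically, so that intersecting with the closed condition ``$\subseteq G(k,n)$'' produces exactly the scheme structure claimed for the space of hypersurfaces in $G(k,n)$, and not merely an equality of underlying sets. In particular the smoothness established inside the proof of Theorem 3.1 is not needed for this statement; it is the identification of the ambient Hilbert scheme, cut down by the ``contained in $G(k,n)$'' condition, that does all the work.
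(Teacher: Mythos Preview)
Your proposal is correct and follows essentially the same approach as the paper: embed $G(k,n)\hookrightarrow\mathbb{P}^N_{\mathbb{C}}$ via Pl\"{u}cker, invoke Theorem~3.1 to identify $Hilb_{P_{d,m}(T)}(\mathbb{P}^N_{\mathbb{C}})$ with the space of degree $d$ hypersurfaces of $\mathbb{P}^m_{\mathbb{C}}$, and then restrict along the closed immersion $Hilb_{P_{d,m}(T)}(G(k,n))\hookrightarrow Hilb_{P_{d,m}(T)}(\mathbb{P}^N_{\mathbb{C}})$. The paper treats this as immediate from Theorem~3.1 without spelling out the closed-immersion-of-Hilbert-schemes step you make explicit, but the content is identical.
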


\begin{corollary}
There is a one-to-one correspondence between the space of degree $d$ planar curves in $G(k,n)$ and the Hilbert scheme $Hilb_{P_{d}(T)}(G(k,n))$.
\end{corollary}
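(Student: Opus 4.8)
The plan is to read this statement off from the case $m = 2$ of the preceding corollary, so essentially nothing new is required beyond checking that the data match. First I would record the elementary identity $P_{d,2}(T) = \binom{T+2}{2} - \binom{T+2-d}{2} = dT + 1 - \binom{d-1}{2} = P_d(T)$, which is a one-line expansion of the binomials, together with the tautology that a degree $d$ hypersurface of $\mathbb{P}^2_{\mathbb{C}}$ is precisely a degree $d$ planar curve. Hence both the Hilbert polynomial and the geometric objects appearing here are exactly the $m = 2$ specializations of those in the corollary identifying $Hilb_{P_{d,m}(T)}(G(k,n))$ with the space of degree $d$ hypersurfaces of $\mathbb{P}^m_{\mathbb{C}}$ contained in $G(k,n)$, and the correspondence is inherited directly.

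For completeness I would also recall why that corollary holds, since the present statement reuses its proof verbatim. For any subvariety $X \subseteq \mathbb{P}^N_{\mathbb{C}}$, the Hilbert scheme $Hilb_{P(T)}(X)$ is naturally the closed subscheme of $Hilb_{P(T)}(\mathbb{P}^N_{\mathbb{C}})$ parametrizing those closed subschemes of $\mathbb{P}^N_{\mathbb{C}}$ with Hilbert polynomial $P(T)$ that are contained in $X$; concretely it is cut out by the condition that the universal family over $Hilb_{P(T)}(\mathbb{P}^N_{\mathbb{C}})$ lie inside $X \times Hilb_{P(T)}(\mathbb{P}^N_{\mathbb{C}})$. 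Taking $P(T) = P_d(T)$ and $X = G(k,n)$ under the Pl\"ucker embedding, and combining with Theorem 3.1, which identifies $Hilb_{P_d(T)}(\mathbb{P}^N_{\mathbb{C}})$ with the space of all degree $d$ planar curves in $\mathbb{P}^N_{\mathbb{C}}$, one concludes that $Hilb_{P_d(T)}(G(k,n))$ is exactly the locus of degree $d$ planar curves lying in $G(k,n)$, with matching structures since both sides are this same closed subscheme of $Hilb_{P_d(T)}(\mathbb{P}^N_{\mathbb{C}})$.

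The only point that carries any real weight is that the identification furnished by Theorem 3.1 should be read scheme-theoretically, so that restricting the universal family to $G(k,n)$ yields the expected structure on both sides; here I would simply reuse the mechanism of Theorem 3.1, where the tautological morphism from the smooth irreducible bundle $\mathbb{P}(Sym^d \mathcal{S}^*)$ over $\mathbb{G}(2,N)$ to $Hilb_{P_d(T)}(\mathbb{P}^N_{\mathbb{C}})$ is bijective and the target is smooth of the same dimension everywhere along its image, so the two agree, and the image is everything by Hartshorne's connectedness. I expect no genuine obstacle beyond bookkeeping: all the substantive work was already carried out in Theorem 3.1.
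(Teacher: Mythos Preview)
Your proposal is correct and follows essentially the same route as the paper: specialize the preceding corollary to $m=2$ via the identity $P_{d,2}(T)=P_d(T)$, using that $Hilb_{P(T)}(X)$ for $X\subset\mathbb{P}^N_{\mathbb{C}}$ is the locus in $Hilb_{P(T)}(\mathbb{P}^N_{\mathbb{C}})$ of subschemes contained in $X$, together with Theorem~3.1. The additional remarks you include about the scheme-theoretic identification are more explicit than what the paper writes, but the substance is the same.
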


We just identified the Hilbert scheme $Hilb_{P_{d,m}(T)}(G(k,n))$ as the space of degree $d$ hypersurfaces of $\mathbb{P}^m_{\mathbb{C}}$ in $G(k,n)$. With this identification, we will find the number of connected components in $Hilb_{P_{d,m}(T)}(G(k,n))$ and say when it is disconnected.

\begin{theorem}
For $d \geq 3, 1<k<n-1$ and $m>2$, the Hilbert scheme $Hilb_{P_{d,m}(T)}(G(k,n))$ has the following number of connected components :
\begin{center}
$\begin{array}{ll}
2< m \leq k \text{ case} & : \big(\text{number of components in } Hilb_{P_{d,m}(T)}(G(k,n))\big) =2 \\
k < m \leq n-k \text{ case} & : \big(\text{number of components in } Hilb_{P_{d,m}(T)}(G(k,n))\big) =1 \\
n-k< m \text{ case} & : \big(\text{number of components in } Hilb_{P_{d,m}(T)}(G(k,n))\big) =0 \\
\end{array}$
\end{center}
\end{theorem}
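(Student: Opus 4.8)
The plan is to reduce the statement to a question about linearly embedded projective subspaces of $G(k,n)$, and then to understand the connected components of the variety of such subspaces. By Corollary 3.3 we may identify $Hilb_{P_{d,m}(T)}(G(k,n))$ with the space of degree $d$ hypersurfaces $X \subset \mathbb{P}^m_{\mathbb{C}} \subset \mathbb{P}^N_{\mathbb{C}}$ that happen to lie in $G(k,n)$. The first step is to observe that since $d \geq 3$, the linear span of such a hypersurface $X$ is the whole ambient $\mathbb{P}^m_{\mathbb{C}}$: a degree $d$ hypersurface in $\mathbb{P}^m$ is nondegenerate (it is not contained in any hyperplane, because its defining polynomial of degree $d \geq 3 > 1$ is not divisible by a linear form when the hypersurface is irreducible — more carefully, one argues that the points of $X$ span $\mathbb{P}^m$ whenever $d\geq 2$). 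Since $G(k,n)$ is cut out by quadrics (the Pl\"ucker relations, using $1<k<n-1$), and a plane $\mathbb{P}^m_{\mathbb{C}}$ meeting $G(k,n)$ in a hypersurface of degree $d \geq 3$ forces those quadrics to vanish on the $\mathbb{P}^m$ entirely once $X$ spans it — because a quadric vanishing on a degree $d \geq 3$ hypersurface in $\mathbb{P}^m$ must vanish on all of $\mathbb{P}^m$. Hence $\mathbb{P}^m_{\mathbb{C}} \subseteq G(k,n)$, and conversely every degree $d$ hypersurface inside a linearly embedded $\mathbb{P}^m \subseteq G(k,n)$ gives a point of the Hilbert scheme.

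The second step is therefore to analyze $\mathcal{F}_m$, the variety of linearly embedded $\mathbb{P}^m_{\mathbb{C}}$'s contained in $G(k,n)$; the Hilbert scheme fibers over $\mathcal{F}_m$ with fiber the (irreducible) $\mathbb{P}^{\binom{m+d}{m}-1}$ of degree $d$ hypersurfaces, so the number of connected components of the Hilbert scheme equals the number of connected components of $\mathcal{F}_m$. This is where the classical description of maximal linear subspaces of a Grassmannian enters. A linear $\mathbb{P}^m \subseteq G(k,n)$ is, up to the two families, of one of two types: an $\alpha$-type family $\{V : A \subseteq V \subseteq B\}$ with $\dim A = k-1$, $\dim B = k+m$ (these are the sub-Grassmannians $G(1, m+1)$ sitting linearly), and a $\beta$-type family $\{V : A' \subseteq V \subseteq B'\}$ with $\dim A' = k - m - 1$... more precisely, the two kinds of linear spaces on $G(k,n)$ in the Pl\"ucker embedding correspond to fixing a $(k-1)$-plane and varying inside a $(k+m)$-plane, or fixing a $(k-m-1)$-plane and varying a $k$-plane inside a fixed... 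I would look up and cite the classification of linear subspaces of the Grassmannian and match dimensions to get: a linear $\mathbb{P}^m$ exists of ``type $a$'' iff $m \leq n-k$, and of ``type $b$'' iff $m \leq k$. Then $\mathcal{F}_m$ is empty iff $m > \max(k, n-k)$; since $k < n-k$ is not assumed I should be careful, but the statement lists the cases $m \leq k$, $k < m \leq n-k$, $m > n-k$, which matches the convention $k \leq n-k$ (replace $k$ by $\min(k,n-k)$ if needed). For $m > n-k$: no linear $\mathbb{P}^m$ exists, so $\mathcal{F}_m = \emptyset$ and the Hilbert scheme is empty — 0 components. For $k < m \leq n-k$: only one of the two families occurs, that family is irreducible (it is a fiber bundle over a flag variety with Grassmannian fibers, hence connected), so $\mathcal{F}_m$ is connected — 1 component. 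For $m \leq k$: both families occur, each is irreducible, and the key claim is that they are disjoint closed subvarieties of $\mathcal{F}_m$ — 2 components.

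The main obstacle is the last case: I must show that the two families of linear $\mathbb{P}^m$'s are genuinely disconnected from each other, i.e. that no degeneration carries a $\mathbb{P}^m$ of one type to one of the other type. The clean way to do this, and the one the introduction hints at, is via cohomology classes: a linearly embedded $\mathbb{P}^m$ of ``type $a$'' has a Schubert class $\sigma_{n-k,\dots,n-k,\, n-k-m}$ (i.e.\ $m$ copies of $n-k$ followed by enough of the smaller index) while one of ``type $b$'' has the transpose-shape class $\sigma_{n-k, \dots, n-k}$ with a different number of full columns — concretely these are two distinct Schubert classes in $H^{2m(n-k-m)+\cdots}(G(k,n))$ of the same codimension, and the degree $d$ hypersurface $X$ in such a $\mathbb{P}^m$ has class $d$ times that class. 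Since the cohomology class is locally constant on any flat family, the two families of hypersurfaces lie in different components \emph{provided} the two Schubert classes are distinct, which holds precisely when $2 < m$ (for $m \leq k$, the two Young-diagram shapes $m \times (n-k-m)$-ish rectangle and its transpose are different) — and one checks the shapes coincide only in degenerate ranges excluded by the hypotheses. So the plan is: (i) identify the Hilbert scheme with hypersurfaces in linear $\mathbb{P}^m$'s inside $G(k,n)$ using $d\geq 3$ and the quadric generation of the Pl\"ucker ideal; (ii) classify linear $\mathbb{P}^m$'s in $G(k,n)$ into the two classical families and read off when each is nonempty; (iii) each nonempty family is irreducible, giving an upper bound on components; (iv) compute the Schubert class of each family and show they differ when $2 < m \leq k$, giving the lower bound of $2$ in that range. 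The delicate points are step (i) (nondegeneracy of degree $\geq 3$ hypersurfaces plus the quadric argument) and the explicit Schubert-class computation in step (iv).
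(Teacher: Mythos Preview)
Your proposal is correct and follows essentially the same route as the paper: reduce to hypersurfaces in linear $\mathbb{P}^m$'s inside $G(k,n)$ via Corollary 3.1.2, use $d\geq 3$ together with the quadratic Pl\"ucker relations to force the ambient $\mathbb{P}^m$ into $G(k,n)$, classify such linear subspaces into the two Schubert types, argue each type is parametrized by an irreducible family, and separate them by their (distinct, since $m>2$) cohomology classes. The only cosmetic differences are that the paper phrases connectedness of each family via the transitive $GL_n$-action rather than your flag-variety fibration, and it distinguishes components directly by $[X]$ rather than by first passing to $[L]$ via the fibration $X\mapsto L$; also note that $[X]$ is $d$ times the class of a hyperplane section of $L$, not $d\cdot[L]$ as you wrote.
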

\begin{proof}
Without loss of generality, assume that $k \leq n-k$. With the Pl\"{u}cker embedding, we can embed $G(k,n)$ into $\mathbb{P}^N_{\mathbb{C}}$ where $N= \binom{n}{k}-1$. Now let $X$ be a degree $d$ hypersurface of $\mathbb{P}^m_{\mathbb{C}}$ in $G(k,n)$ and let $L$ be an $m$-dimensional projective space which contains $X$ in $\mathbb{P}^N_{\mathbb{C}}$.

Since $1<k<n-1$ and the Pl\"{u}cker relations are quadratic polynomials, $G(k,n)$ is  an intersection of finitely many quadratic hypersurfaces in $\mathbb{P}^N_{\mathbb{C}}$.
\begin{center}
$G(k,n) = \bigcap\limits_{i} Q_i \subset \mathbb{P}^N_{\mathbb{C}}$
\end{center}
From $X \subset G(k,n)$, we can claim that $X \subset Q_i$ for all quadratic hypersurfaces of $\mathbb{P}^N_{\mathbb{C}}$. Now if we think about a linear subspace $L$, then $X$ should be contained in $L \cap Q_i$. If $L \cap Q_i \neq L$, then $L \cap Q_i$ should be a quadratic hypersurface of $\mathbb{P}^m_{\mathbb{C}} \cong L$. However, $\deg(X)=d \geq 3$ and this conflicts with the condition that $X \subset L \cap Q_i$. Therefore, $L \cap Q_i = L$ and this implies that quadratic hypersurface $Q_i$ of $\mathbb{P}^N_{\mathbb{C}}$ must contain the $m$-dimensional projective space $L$ for all $i$. Since $G(k,n)$ is an intersection of all $Q_i$, we can conclude that $L$ is contained in $G(k,n)$.

Now let's look at the cohomology classes of $X$ and $L$. If we denote the cohomology class of $L$ as $[L]$, then Pieri's formula [3] implies that the only possible cohomology classes for $[L]$ are either $\sigma_{n-k,\cdots,n-k,i}~(0\leq i \leq n-k)$ or $\sigma_{n-k,\cdots,n-k,n-k-1,\cdots,n-k-1}$ ($j$ many $n-k$ in the index, where $0 \leq j \leq k$). For each case, the cohomology class of $X$ can be computed as follows :
\begin{center}
$\begin{array}{ll}
[X] = d \cdot \sigma_{n-k,\cdots,n-k,n-k-m+1} & \text{if } [L] = \sigma_{n-k,\cdots,n-k,n-k-m} \\

[X] = d \cdot \sigma_{n-k,\cdots,n-k,n-k-1,\cdots,n-k-1} & \text{if }  [L] = \sigma_{n-k,\cdots,n-k,n-k-1,\cdots,n-k-1} \\

(m-1 \text{ many }n-k-1's) & (m \text{ many }n-k-1's)
\end{array}$
\end{center}
Since $m>2$, we claim that $[X]$ has two possible cases for $2<m \leq k$. If we think about the case when $[X] = d \cdot \sigma_{n-k,\cdots,n-k,n-k-m+1}$, then linear space $L$ should have a cohomology class of $[L] = \sigma_{n-k,\cdots,n-k,n-k-m}$. Recall that if two projective spaces in Grassmannian have the same cohomology classes, then there exists a linear automorphism (Pl\"{u}cker image of a linear automorphism of $\mathbb{C}^n$) on the Grassmannian such that implies isomorphism between those two projective spaces. With this property, the space of $m$-dimensional projective space of cohomology class $\sigma_{n-k,\cdots,n-k,n-k-m}$ in $G(k,n)$ is connected. Moreover, the space of all degree $d$ hypersurfaces in $\mathbb{P}^m_{\mathbb{C}}$ are connected [1]. By combining all those results, we can conclude that the cohomology class $d \cdot \sigma_{n-k,\cdots,n-k,n-k-m+1}$ induces one connected component. In the same way, the other cohomology class induces distinct one connected component. Therefore, for $d\geq 3, 1<k<n-1$ and $2<m \leq k$, $Hilb_{P_{d,m}(T)}(G(k,n))$ has two connected components.

With the same condition on $d$ and $k$, the number of connected component can vary for larger $m$. Because, $[L]$ will not exist for higher $m$ and this will also imply non-existence of $X$. If $k+1 \leq m \leq n-k$, then only possible $[L]$ is
\begin{center}
$[L]= \sigma_{n-k,\cdots,n-k,n-k-1,\cdots,n-k-1}$ ($m$ many $n-k-1$'s)
\end{center}
and this show that $Hilb_{P_{d,m}(T)}(G(k,n))$ has only one connected component. If $m$ is bigger than $n-k$, then such $m$-dimensional linear subspace $L$ will not exist in $G(k,n)$. Therefore, $Hilb_{P_{d,m}(T)}(G(k,n))$ is empty. In summary, for $d \geq 3, 1<k<n-1$
\begin{center}
$\begin{array}{ll}
2< m \leq k \text{ case} & : \big(\text{number of components in } Hilb_{P_{d,m}(T)}(G(k,n))\big) =2 \\
k < m \leq n-k \text{ case} & : \big(\text{number of components in } Hilb_{P_{d,m}(T)}(G(k,n))\big) =1 \\
n-k< m \text{ case} & : \big(\text{number of components in } Hilb_{P_{d,m}(T)}(G(k,n))\big) =0 \\
\end{array}$
\end{center}
As a result, for $d\geq 3, 1<k<n-1$ and $m>2$, $Hilb_{P_{d,m}(T)}(G(k,n))$ has at most 2 connected components
\end{proof}
We did not emphasize this, but it is obvious that two connected components are disconnected, if elements in each component have different cohomology classes. If they are connected, that means two elements in different components induce a flat family and this tells us two elements can deform to each other. However, if this is true, they should have the same cohomology class, but they don't.

\begin{corollary}
For $d\geq 3, 1<k<n-1$ and $2<m \leq k$, $Hilb_{P_{d,m}(T)}(G(k,n))$ is disconnected.
\end{corollary}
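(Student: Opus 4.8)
The claim is essentially immediate from the preceding theorem: in the range $2<m\leq k$ that result asserts that $Hilb_{P_{d,m}(T)}(G(k,n))$ has exactly two connected components, and a scheme with two connected components is by definition disconnected. So the plan is simply to quote that count. What I would add, for a self-contained reading, is to make the two components visibly non-empty and visibly separated.

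To do this I would, assuming without loss of generality $k\leq n-k$, produce explicit points in each component. Because $2<m\leq k\leq n-k$, the Grassmannian $G(k,n)$ contains $m$-dimensional linear subspaces $L$ of both admissible Pl\"ucker types isolated in the proof of the preceding theorem: one with $[L]=\sigma_{n-k,\cdots,n-k,n-k-m}$ and one with $[L]=\sigma_{n-k,\cdots,n-k,n-k-1,\cdots,n-k-1}$ ($m$ many $n-k-1$'s). It is exactly the hypothesis $m\leq k$ that leaves enough slots in the partition for the second type to occur, so that both genuinely appear. Picking any degree $d$ hypersurface $X_1$ inside a plane of the first type and $X_2$ inside a plane of the second type, the identification of $Hilb_{P_{d,m}(T)}(G(k,n))$ with the space of degree $d$ hypersurfaces of $\mathbb{P}^m_{\mathbb{C}}$ in $G(k,n)$ places the corresponding ideals in the Hilbert scheme, and by the Pieri computation already carried out we have $[X_1]=d\,\sigma_{n-k,\cdots,n-k,n-k-m+1}$ while $[X_2]=d\,\sigma_{n-k,\cdots,n-k,n-k-1,\cdots,n-k-1}$ ($m-1$ many $n-k-1$'s); these are distinct Schubert classes, hence distinct cohomology classes.

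Finally I would invoke the fact that the cohomology class of the fibers of a flat family over a connected base is constant, so a connected component of the Hilbert scheme carries a single fixed class; therefore $X_1$ and $X_2$, having different classes, lie in different components, and $Hilb_{P_{d,m}(T)}(G(k,n))$ is disconnected. The only point that requires any care is the verification that the two displayed Schubert classes are genuinely different partitions when $2<m\leq k$ and that both plane types actually occur in $G(k,n)$ — but this is precisely what the Pieri analysis in the proof of the preceding theorem already supplies, so no new obstacle arises and the remainder is formal.
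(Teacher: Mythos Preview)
Your proposal is correct and follows essentially the same route as the paper: the corollary is an immediate consequence of the preceding theorem's count of two connected components, and the separation is witnessed by the two distinct cohomology classes together with the constancy of cohomology class along flat families. Your explicit exhibition of points $X_1,X_2$ in each component is a more detailed rendering of exactly the argument the paper sketches in the paragraph preceding the corollary.
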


For $m=2$ case, there is only one possible cohomology class for $X$ which is $d \cdot \sigma_{n-k,\cdots,n-k,n-k-1}$. However, $Hilb_{P_{d,2}(T)}(G(k,n))=Hilb_{P_{d}(T)}(G(k,n))$ is not connected, because if we consider the cohomology class of $L \cong \mathbb{P}^2_{\mathbb{C}}$ in the Grassmannian, there are two different possible cohomology classes.

Now let's think about the geometry of such a Hilbert scheme. Theorem 3.1 tells us that every subscheme of Hilbert polynomial $P_{d,m}(T)$ is a degree $d$ hypersurface of $\mathbb{P}^m_{\mathbb{C}}$. So, if we could construct the space of linear subspaces $L \cong \mathbb{P}^m_{\mathbb{C}}$ in $G(k,n)$ and then if we construct appropriate vector bundle over such space by using symmetric algebra, then that will give the geometry of Hilbert scheme.

Before talking about the Hilbert scheme, we define the tautological normal bundle $\mathcal{S}$ over a 2 step flag variety $F(k_1,k_2;n)$. If we pick an arbitrary element in the space $F(k_1,k_2;n)$, then it can be expressed as $(W_1,W_2)$, where $W_1 \subset W_2 \subset \mathbb{C}^n$, $\dim W_1=k_1$, and $\dim W_2=k_2$. At this point, we can find $\mathbb{P}(W_2/W_1) \cong \mathbb{P}^m_{\mathbb{C}}$ and by corresponding such projective quotient space as a fiber over the point $(W_1,W_2)$, we can construct the \textit{tautological normal bundle} $\mathcal{S}$ over $F(k_1,k_2;n)$. In other words, if we define the bundle $\mathcal{S}_i$ as a bundle over $F(k_1,k_2;n)$, which each fiber at $(W_1,W_2)$ is $W_i$, for $i=1,2$, then the tautological normal bundle is isomorphic to the quotient bundle $\mathcal{S}_2/ \mathcal{S}_1 \cong \mathcal{S}$. The tautological normal bundle will give a picture of the geometry of the Hilbert scheme.

\begin{theorem}
Assume that $d \geq 3, 1<k \leq n-k$ and $2<m \leq k$. Then, $Hilb_{P_{d,m}(T)}(G(k,n))$ is isomorphic to $\mathbb{P}(Sym^d~\mathcal{S}^*)$, where $\mathcal{S}$ is the tautological normal bundle over the union of flag varieties $F(k-1,k+m;n) \cup F(k-m,k+1;n)$
\end{theorem}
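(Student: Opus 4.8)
The plan is to bootstrap off Theorem 3.1. Embed $G(k,n)$ in $\mathbb{P}^N_{\mathbb{C}}=\mathbb{P}(\wedge^k\mathbb{C}^n)$ via Pl\"ucker, let $\mathbb{G}=\mathbb{G}(m,N)$ be the Grassmannian of $\mathbb{P}^m$'s in $\mathbb{P}^N_{\mathbb{C}}$ with universal rank $m+1$ subbundle $\mathcal{S}_0$, and let $\pi:\mathbb{P}(Sym^d\mathcal{S}_0^*)\to\mathbb{G}$ be the projective bundle. Theorem 3.1 (together with the smoothness established in its proof) identifies $Hilb_{P_{d,m}(T)}(\mathbb{P}^N_{\mathbb{C}})$ with $\mathbb{P}(Sym^d\mathcal{S}_0^*)$ as schemes, in such a way that $\pi$ is the morphism ``take the linear span'' (legitimate since $d\ge 2$ forces a degree $d$ hypersurface of $\mathbb{P}^m$ to span it). Since $Hilb_{P_{d,m}(T)}(G(k,n))$ is the closed subscheme of $Hilb_{P_{d,m}(T)}(\mathbb{P}^N_{\mathbb{C}})$ representing flat families of subschemes of $G(k,n)$, everything reduces to identifying this closed subscheme, and I claim it is $\pi^{-1}(Z)$, where $Z\subseteq\mathbb{G}$ is the closed subscheme of $m$-planes lying in $G(k,n)$. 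The inclusion $\pi^{-1}(Z)\subseteq Hilb_{P_{d,m}(T)}(G(k,n))$ is immediate, the universal hypersurface sitting inside the universal $\mathbb{P}^m$-bundle, which lies in $G(k,n)\times Z$. Conversely, given a flat family $\mathcal{X}/T$ of subschemes of $G(k,n)$ with Hilbert polynomial $P_{d,m}(T)$, each fibre is a degree $d$ hypersurface of a $\mathbb{P}^m$ (its ideal is principal, generated in degree $d\ge 3$, hence has no linear part), so $\mathcal{X}$ sits inside a $\mathbb{P}^m$-bundle $\mathcal{L}/T\subseteq\mathbb{P}^N_{\mathbb{C}}\times T$; the restriction of any Pl\"ucker quadric $Q_i$ to $\mathcal{L}$ is a relative divisor of relative degree $\le 2$ whose equation lies in the relative ideal of $\mathcal{X}$, generated in relative degree $d\ge 3$, so it vanishes, i.e.\ $\mathcal{L}\subseteq G(k,n)\times T$ --- exactly the assertion that the classifying map $T\to\mathbb{G}$ factors scheme-theoretically through $Z$. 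This is the family version of the proof of Theorem 3.5; as there, the point is that scheme-theoretic containment forces the restriction of $Q_i$ to the span to be a multiple of the degree $d$ equation, hence zero. Thus $Hilb_{P_{d,m}(T)}(G(k,n))\cong\mathbb{P}(Sym^d(\mathcal{S}_0|_Z)^*)$.

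Next I would determine $Z$. Set-theoretically, the classification of linear subspaces of the Grassmannian together with the cohomology computation in the proof of Theorem 3.5 says an $m$-plane on $G(k,n)$ has class $\sigma_{n-k,\dots,n-k,n-k-m}$ --- realised precisely by the planes $\{V:W_1\subseteq V\subseteq W_2\}$ with $(W_1,W_2)\in F(k-1,k+m;n)$ --- or class $\sigma_{n-k,\dots,n-k,n-k-1,\dots,n-k-1}$ ($m$ many $n-k-1$'s), realised by those with $(W_1,W_2)\in F(k-m,k+1;n)$; since $m>1$ these classes are distinct and $PGL_n$ acts transitively on each family, so $Z_{\mathrm{red}}=F(k-1,k+m;n)\sqcup F(k-m,k+1;n)$. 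To upgrade this to a scheme equality it suffices to show $Z$ is smooth, i.e.\ $H^1(L,\mathcal{N}_{L/G(k,n)})=0$ and $h^0(L,\mathcal{N}_{L/G(k,n)})=\dim F(\cdot,\cdot;n)$ at each $[L]\in Z$. For $L$ of the first type, decomposing $T_{V}G(k,n)=\mathrm{Hom}(V,\mathbb{C}^n/V)$ along $W_1\subseteq V\subseteq W_2$ gives, over $L=\mathbb{P}(W_2/W_1)\cong\mathbb{P}^m$, an isomorphism $\mathcal{N}_{L/G(k,n)}\cong\mathcal{Q}^{\oplus(k-1)}\oplus\mathcal{O}_L^{\oplus(k-1)(n-k-m)}\oplus\mathcal{O}_L(1)^{\oplus(n-k-m)}$ with $\mathcal{Q}=T_{\mathbb{P}^m}(-1)$, and the Euler sequence gives $H^1=0$ and $h^0=(k-1)(n-k+1)+(m+1)(n-k-m)=\dim F(k-1,k+m;n)$; the second type follows from $G(k,n)\cong G(n-k,n)$, which swaps the two families. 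Hence $Z=F(k-1,k+m;n)\sqcup F(k-m,k+1;n)$ as schemes. Finally, the span of the plane at $(W_1,W_2)\in F(k-1,k+m;n)$ is $(\det\mathcal{S}_1)\otimes(\mathcal{S}_2/\mathcal{S}_1)$, so $\mathcal{S}_0|_{F(k-1,k+m;n)}\cong(\det\mathcal{S}_1)\otimes\mathcal{S}$ with $\mathcal{S}=\mathcal{S}_2/\mathcal{S}_1$ the tautological normal bundle; since twisting by a line bundle does not change $\mathbb{P}(Sym^d(-)^*)$, this component of $\mathbb{P}(Sym^d\mathcal{S}_0^*)$ is $\mathbb{P}(Sym^d\mathcal{S}^*)$, and likewise on $F(k-m,k+1;n)$ (where the plane is the space of hyperplanes of $W_2/W_1$, so with the convention of the definition preceding the statement one again obtains $\mathbb{P}(Sym^d\mathcal{S}^*)$). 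Assembling the pieces proves the theorem.

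The main obstacle is the normal-bundle computation: producing the explicit splitting of $\mathcal{N}_{L/G(k,n)}$ and verifying the cohomology vanishing. This is also where the hypothesis $m>2$ enters --- e.g.\ if one instead computes the tangent space of $Hilb_{P_{d,m}(T)}(G(k,n))$ at $[X]$ directly from $0\to\mathcal{O}_X(d)\to\mathcal{N}_{X/G(k,n)}\to\mathcal{N}_{L/G(k,n)}|_X\to 0$, one needs $H^1(L,\mathcal{N}_{L/G(k,n)}(-d))=0$, and the $\mathcal{Q}(-d)$ summand contributes $H^2(\mathbb{P}^m,\mathcal{O}(-1-d))$, which vanishes exactly for $m\ge 3$; for $m=2$ it does not, consistent with the theorem being stated only for $m>2$. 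A secondary, purely bookkeeping issue is to ensure that all the family-theoretic containments above are scheme-theoretic, so that $Hilb_{P_{d,m}(T)}(G(k,n))=\pi^{-1}(Z)$ on the nose and the reducedness of $Z$ is a conclusion (via the smoothness computation) rather than a hypothesis.
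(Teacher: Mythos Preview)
Your argument is correct and follows the same skeleton as the paper: reduce to the space $Z$ of $m$-planes on $G(k,n)$ via the ``take the linear span'' map, then identify $Z$ with the union of two flag varieties by the standard classification of linear spaces on the Grassmannian. The paper's proof stops there, treating this identification as a set-theoretic bijection and simply asserting that the bundle $\mathbb{P}(Sym^d\,\mathcal S^*)$ is the Hilbert scheme; it does not check that $Z$ carries the reduced scheme structure, nor does it verify the bundle identification up to twist. Your additions --- the functor-of-points argument that $Hilb_{P_{d,m}}(G(k,n))=\pi^{-1}(Z)$ scheme-theoretically, the explicit splitting of $\mathcal N_{L/G(k,n)}$, and the resulting smoothness of $Z$ --- are exactly what is needed to upgrade the paper's informal bijection to a scheme isomorphism, and your computation $h^0(\mathcal N_{L/G(k,n)})=(k-1)(n-k+1)+(m+1)(n-k-m)=\dim F(k-1,k+m;n)$ checks out.

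Two minor remarks. First, your references to ``Theorem~3.5'' should be to Theorem~3.2, which is where the paper proves that the span $L$ of $X$ lies in $G(k,n)$ when $d\ge 3$. Second, your closing observation about where the hypothesis $m>2$ enters is not quite on target: the normal-bundle calculation you actually use (for $\mathcal N_{L/G(k,n)}$, not $\mathcal N_{X/G(k,n)}$) works already for $m=2$, and indeed the paper records the $m=2$ case separately as Corollary~4.2.1. The restriction $m>2$ in the statement is inherited from Theorem~3.2 for expository reasons (the $m=2$ disconnectedness requires the extra Lemma~4.1 on cohomology classes of the spanning planes), not because the geometric description fails.
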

\begin{proof}
In Theorem 3.2, we proved that this Hilbert scheme has two connected components, so let's look at each component step by step.

If we take an arbitrary element $X$ in the Hilbert scheme $Hilb_{P_{d,m}(T)}(G(k,n))$, then there is unique $m$ dimensional linear subspace $L$ in $G(k,n)$ which contains $X$. Moreover, if we fix an $m$-dimensional linear subspace $L$ in $G(k,n)$, we can express the family of degree $d$ hypersurfaces in $L$, by using the $d$-th graded part of the symmetric algebra of the dual of the tautological bundle. Therefore, to observe the geometry of this Hilbert scheme, it is enough to find the space of $m$ dimensional subspaces with the correct tautological bundle on it.

We have shown that there are only two possible cohomology classes for such $L$ in Theorem 3.2. Let's start with the first case.

$\bullet$ (case 1) $[L] = \sigma_{n-k,\cdots,n-k,n-k-m}$

If $[L] = \sigma_{n-k,\cdots,n-k,n-k-m}$, there is a complete flag $F_{\bullet}$ of $\mathbb{C}^n$ such that 
\begin{center}
$L= \Sigma_{n-k,\cdots,n-k,n-k-m}(F_{\bullet})$
\end{center}
where $F_{\bullet}$ is a complete flag with respect to a basis ${v_1,\cdots,v_n}$. If we say that $V\cong \mathbb{C}^k$ is an element in $L \subset G(k,n)$, then the definition of Schubert cycle implies that
\begin{center}
$\dim V \cap F_{i} \geq i$ for $1\leq i \leq k-1$

$\dim V \cap F_{k+m} \geq k$
\end{center}
Therefore, with the Pl\"{u}cker embedding, $V$ can be expressed as following :
\begin{center}
$V \mapsto v_1 \wedge \cdots \wedge v_{k-1} \wedge (a_kv_k + \cdots + a_{k+m}v_{k+m})$

where $a_k,\cdots,a_{k+m} \in \mathbb{C}$ and $(a_k,\cdots,a_{k+m}) \neq (0,\cdots,0)$

(by corresponding $V$ with $[a_k: \cdots : a_{k+m}] \in \mathbb{P}^m_{\mathbb{C}}$, we get $L \cong \mathbb{P}^m_{\mathbb{C}}$)
\end{center}
So, when $[L] = \sigma_{n-k,\cdots,n-k,n-k-m}$, then we can express all the points in $L$ as $v_1 \wedge \cdots \wedge v_{k-1} \wedge (a_kv_k + \cdots + a_{k+m}v_{k+m})$. Moreover, we can determine such $L$ by picking a $k-1$ dimensional space $Span \{v_1,\cdots,v_{k-1} \}$ and a $k+m$ dimensional space $Span \{v_1,\cdots,v_{k+m} \}$. Therefore, the space of such $L$ is the flag variety $F(k-1,k+m;n)$. 

With the tautological bundle $\mathcal{S}$ above, if we pick an arbitrary point $(W,W') \in F(k-1,k+m;n)$ then the bundle $\mathbb{P}(Sym^d~\mathcal{S}^*)$ defines degree $d$ homogeneous polynomial defined over $\mathbb{P}(W'/W)\cong \mathbb{P}^m_{\mathbb{C}}$ which corresponds to the degree $d$ hypersurface in $\mathbb{P}^m_{\mathbb{C}}$. Therefore, the connected component in $Hilb_{P_{d,m}(T)}(G(k,n))$ which corresponds to the cohomology class $d \cdot \sigma_{n-k,\cdots,n-k,n-k-m+1}$ is isomorphic to the bundle $\mathbb{P}(Sym^d~\mathcal{S}^*) \to F(k-1,k+m;n)$.

$\bullet$ (case 2) $[L] = \sigma_{n-k,\cdots,n-k,n-k-1,\cdots,n-k-1}$ $(m \text{ many }n-k-1's)$

For this case, with the same method as above, we can express $L$ as following
\begin{center}
$L= \Sigma_{n-k,\cdots,n-k,n-k-1,\cdots,n-k-1}(F_{\bullet})$.
\end{center}
By using the definition of Schubert varieties, if we take an arbitrary element $V \in L$, then
\begin{center}
$\dim V \cap F_{i} \geq i$ for $1 \leq i \leq k-m$

$\dim V \cap F_{i+1} \geq i$ for $k-m+1 \leq i \leq k$
\end{center}
Therefore, each $V \cong \mathbb{C}^k$ can be expressed as
\begin{center}
$e_1 \wedge \cdots \wedge e_{k-m} \wedge (*e_{k-m+1} + *e_{k-m+2}) \wedge \cdots \wedge (*e_{k-m+1} + \cdots + *e_{k+1})$

where $*$ are constants in $\mathbb{C}$
\end{center}
Therefore, the space of $L$ can be defined as $F(k-m,k+1;n)$ in this case. With the same process above in (case 1), we can deduce the tautological normal bundle $\mathcal{S}$ and the bundle $\mathbb{P}(Sym^d~\mathcal{S}^*) \to F(k-m,k+1;n)$ is isomorphic to the connected component in $Hilb_{P_{d,m}(T)}(G(k,n))$ which corresponds to the cohomology class $d \cdot \sigma_{n-k,\cdots,n-k,n-k-1,\cdots,n-k-1}$.

By investigating those two cases, we conclude that $Hilb_{P_{d,m}(T)}(G(k,n))$ is isomorphic to $\mathbb{P}(Sym^d~\mathcal{S}^*)$ over the flag variety $F(k-1,k+m;n) \cup F(k-m,k+1;n)$.
\end{proof}

\begin{corollary}
Assume that $d \geq 3, 1<k<n-k$ and $k<m \leq n-k$. Then, $Hilb_{P_{d,m}(T)}(G(k,n))$ is isomorphic to $\mathbb{P}(Sym^d~\mathcal{S}^*)$, where $\mathcal{S}$ is the tautological normal bundle over the flag variety $F(k-m,k+1;n)$
\end{corollary}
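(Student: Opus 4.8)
The plan is to read this corollary as the single-component specialization of Theorem 3.3 and to rerun that theorem's argument in the range $k<m\le n-k$. First I would invoke Theorem 3.2, which already shows that for $d\ge 3$, $1<k<n-k$ and $k<m\le n-k$ the Hilbert scheme $Hilb_{P_{d,m}(T)}(G(k,n))$ has exactly one connected component. Combined with the identification of this Hilbert scheme with the space of degree $d$ hypersurfaces of $\mathbb{P}^m_{\mathbb{C}}$ sitting inside $G(k,n)$ (Theorem 3.1 together with its one-to-one correspondence for $G(k,n)$), the problem reduces, exactly as in Theorem 3.3, to describing the parameter space of the spanning $m$-plane $L\cong\mathbb{P}^m_{\mathbb{C}}$ of such a hypersurface and then equipping it with the tautological normal bundle so that $\mathbb{P}(Sym^d~\mathcal{S}^*)$ recovers the hypersurfaces fiberwise.

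Second, I would pin down the single surviving family of planes. Recall from the proof of Theorem 3.3 that a linear $\mathbb{P}^m_{\mathbb{C}}\subset G(k,n)$ carries one of two Schubert classes, realized respectively by the families $F(k-1,k+m;n)$ (case 1) and $F(k-m,k+1;n)$ (case 2). The cohomology bookkeeping of Theorem 3.2 shows that for $k<m\le n-k$ only one of the two classes can occur, namely the one whose planes form the family $F(k-m,k+1;n)$ named in the statement, so that the case 1 flag variety drops out of the union $F(k-1,k+m;n)\cup F(k-m,k+1;n)$ of Theorem 3.3. Thus the unique component is precisely the one analyzed in case 2 there.

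Third, I would transport the case 2 construction of Theorem 3.3 verbatim. Over a point $(W,W')\in F(k-m,k+1;n)$ the tautological normal bundle $\mathcal{S}=\mathcal{S}_2/\mathcal{S}_1$ has fiber $W'/W$ of rank $m+1$, the projectivization $\mathbb{P}(W'/W)\cong\mathbb{P}^m_{\mathbb{C}}$ recovers the plane $L$, and the fiber of $\mathbb{P}(Sym^d~\mathcal{S}^*)$ over $(W,W')$ is exactly the linear system of degree $d$ hypersurfaces of that $\mathbb{P}^m_{\mathbb{C}}$. By the one-to-one correspondence for $G(k,n)$ these hypersurfaces are exactly the points of $Hilb_{P_{d,m}(T)}(G(k,n))$ lying over $L$, which yields a bijection $\mathbb{P}(Sym^d~\mathcal{S}^*)\to Hilb_{P_{d,m}(T)}(G(k,n))$ on points.

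The step I expect to be the main obstacle is promoting this bijection to an isomorphism of schemes, together with the bookkeeping that certifies case 1 is genuinely empty here. For the isomorphism I would feed the tautological flat family of hypersurfaces carried by $\mathbb{P}(Sym^d~\mathcal{S}^*)$ into the universal property of the Hilbert scheme to produce an honest morphism, and then conclude it is an isomorphism by combining bijectivity with smoothness: the tangent-space computation of Theorem 3.1 forces both sides to be smooth of equal dimension along the relevant locus, and a bijective morphism of smooth varieties of the same dimension is an isomorphism. The delicate new ingredient relative to Theorem 3.3 is that the computation now runs in the range $k<m\le n-k$ where one of the two cases is vacuous, so before concluding I would carefully re-examine the Pieri and Schubert computation underlying Theorem 3.2 to confirm that no spanning $m$-plane realizes the discarded class, leaving $\mathbb{P}(Sym^d~\mathcal{S}^*)\to F(k-m,k+1;n)$ as the entire Hilbert scheme.
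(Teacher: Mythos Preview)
Your approach is exactly the paper's: the proof there is the single line ``Use Theorems 3.2 and 3.3,'' and your elaboration of how those two results combine (unique spanning $m$-plane, identify the surviving family of planes, then realize the component as $\mathbb{P}(Sym^d\,\mathcal{S}^*)$ over that flag variety via the universal property) is precisely the intended unpacking.

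One concrete point deserves attention, and it is exactly the step you flagged as the place you would ``carefully re-examine the Pieri and Schubert computation.'' When $k<m\le n-k$, the case that survives is case~1 of Theorem~3.3, not case~2: the case~2 class $\sigma_{n-k,\ldots,n-k,n-k-1,\ldots,n-k-1}$ with $m$ copies of $n-k-1$ needs $m\le k$ (a Schubert index in $G(k,n)$ has only $k$ entries), and correspondingly $F(k-m,k+1;n)$ is not defined once $k-m<0$. So the single component is $\mathbb{P}(Sym^d\,\mathcal{S}^*)\to F(k-1,k+m;n)$. The paper's statement of the corollary (and the tail of the proof of Theorem~3.2) carry the same swapped label, so your proposal is faithful to the paper; but the re-examination you planned would reveal this and your argument goes through with the corrected flag variety.
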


\begin{proof}
Use Theorems 3.2 and 3.3.
\end{proof}

\section{Disconnectivity of $Hilb_{P_{d}(T)}(G(k,n))$ for $d\geq 3$}

In Theorem 3.2, we proved that $Hilb_{P_{d,m}(T)}(G(k,n))$ has at most 2 connected components and each component has distinct cohomology classes. In the case of $m=2$, when $P_{d,m}(T)=P_{d}(T)$, Corollary 3.1.3 implies that $Hilb_{P_{d}(T)}(G(k,n))$ is the space of degree $d$ planar curves, and all degree $d$ planar curves has a unique cohomology class $d \cdot \sigma_{n-k,\cdots,n-k,n-k-1}$. However, it is not true that the Hilbert scheme $Hilb_{P_{d}(T)}(G(k,n))$ is connected. In this section, we will show when $d\geq 3$ this Hilbert scheme is disconnected.

Most of the proof is similar to the proof of Theorem 3.2. Instead of using $X$ and $L$, let $C$ be a degree $d$ plane curve in $G(k,n)$ and $P \cong \mathbb{P}^2_{\mathbb{C}}$ the unique plane in $G(k,n)$ such that contains $C$ (existence and uniqueness of $P$ is proved in Theorem 3.2).

\begin{lemma}
Let $C$ and $C'$ be degree $d$ planar curves in $G(k,n)$ such that $I_C,I_{C'} \in Hilb_{P_{d}(T)}(G(k,n))$ are in the same connected components of the Hilbert scheme. If we say $P,P'$ are planes in $G(k,n)$ which satisfy $C \subset P$ and $C' \subset P'$, then 
\begin{center}
(cohomology class of $P$ in $G(k,n)$) = (cohomology class of $P'$ in $G(k,n)$)
\end{center}
\end{lemma}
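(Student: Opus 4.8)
The plan is to realize the assignment $C \mapsto P$, ``take the plane spanned by the curve,'' as an algebraic morphism out of the Hilbert scheme, and then to exploit the fact that the cohomology class of a subvariety of $G(k,n)$ is a deformation invariant, hence constant on connected families. Concretely, by the proof of Theorem 3.1 and Corollary 3.1.1 the Hilbert scheme $Hilb_{P_d(T)}(\mathbb{P}^N_{\mathbb{C}})$ is the projective bundle $\mathbb{P}(Sym^d \mathcal{S}^*) \to \mathbb{G}(2,N)$, where $\mathbb{G}(2,N)$ is the space of planes in $\mathbb{P}^N_{\mathbb{C}}$ and $\mathcal{S}$ is its rank-$3$ universal subbundle; on $\mathbb{C}$-points the bundle projection sends a degree $d$ planar curve to the unique plane containing it, which, since $\deg C = d \geq 3 > 1$, is exactly the linear span of $C$. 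Restricting this projection to the closed subscheme $Hilb_{P_d(T)}(G(k,n)) \subseteq Hilb_{P_d(T)}(\mathbb{P}^N_{\mathbb{C}})$ produces a morphism
$$\phi \colon Hilb_{P_d(T)}(G(k,n)) \longrightarrow \mathbb{G}(2,N), \qquad I_C \longmapsto [P].$$

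Next I would identify where $\phi$ lands. For $C \subset G(k,n)$ the argument of Theorem 3.2 applies verbatim: writing $G(k,n) = \bigcap_i Q_i$ as an intersection of quadrics, $C \subset Q_i$ forces $P \cap Q_i = P$, since otherwise $P \cap Q_i$ would be a conic inside $P \cong \mathbb{P}^2_{\mathbb{C}}$ containing the degree $d \geq 3$ curve $C$; hence $P \subset G(k,n)$. Therefore $\phi$ factors through the locally closed subvariety $\mathcal{P} := \{[L] \in \mathbb{G}(2,N) : L \subseteq G(k,n)\}$. Pulling back the universal plane on $\mathbb{G}(2,N)$ to $\mathcal{P}$ gives a flat family of planes all contained in $G(k,n)$, so the function $[L] \mapsto (\text{cohomology class of } L \text{ in } G(k,n))$ is locally constant on $\mathcal{P}$ (cohomology classes are constant in flat families over a connected base).

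To conclude, suppose $I_C$ and $I_{C'}$ lie in the same connected component $Z$ of $Hilb_{P_d(T)}(G(k,n))$. Then $\phi(Z) \subseteq \mathcal{P}$ is connected, so the locally constant cohomology-class function is constant on $\phi(Z)$; in particular $P$ and $P'$ have the same class in $H^*(G(k,n))$. The step that needs the most care is the first one: showing that $C \mapsto P$ is genuinely a morphism of schemes and not merely a set-theoretic map, i.e.\ that the linear span of the universal curve over $Hilb_{P_d(T)}(G(k,n))$ varies algebraically. This is precisely what the projective bundle description of $Hilb_{P_d(T)}(\mathbb{P}^N_{\mathbb{C}})$ in Corollary 3.1.1 supplies, the relative span being realized by the bundle projection $\mathbb{P}(Sym^d \mathcal{S}^*) \to \mathbb{G}(2,N)$; once this is in hand, the remainder is a routine combination of connectedness with the deformation invariance of cohomology classes.
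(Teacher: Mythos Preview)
Your argument is correct and follows essentially the same strategy as the paper's proof: both establish that the assignment $C \mapsto P$ varies continuously over the Hilbert scheme and then invoke deformation invariance of cohomology classes. Your version is in fact more careful than the paper's, which simply asserts that a flat family connecting $I_C$ to $I_{C'}$ ``induces a continuous deformation from $P$ to $P'$''; you make this precise by identifying the span map with the restriction of the bundle projection $\mathbb{P}(Sym^d \mathcal{S}^*) \to \mathbb{G}(2,N)$ from Theorem~3.1.
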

\begin{proof}
Let $I_P, I_{P'}$ be ideals on $\mathbb{P}^N_{\mathbb{C}}$ corresponding to $P$ and $P'$ respectively (where $G(k,n)$ embeds to $\mathbb{P}^N_{\mathbb{C}}$ by Pl\"{u}cker embedding). Since $C \subset P$ and $C' \subset P'$, we can find inclusions $I_P \hookrightarrow I_C$ and $I_{P'} \hookrightarrow I_{C'}$. Also $I_C$ and $I_{C'}$ are connected, there is a flat family $\{I_t\}$ of ideals on $H_{P_{d}(T)}(G(k,n))$ which connects $I_C$ and $I_{C'}$ (either $I_0=I_C, I_1=I_{C'}$ or $I_0=I_{C'}, I_1=I_C$).
\begin{center}
\begin{tikzpicture}[every node/.style={midway}]
        \matrix[column sep={12em,between origins}, row sep={5em}] at (0,0) {
\node(11) {$I_C$};&\node(12) {$I_{C'}$};\\
\node(21) {$I_P$};&\node(22) {$I_{P'}$};\\
        };
        \draw[right hook->] (21) -- (11) ;
        \draw[right hook->] (22) -- (12) ;
        \draw[->] (11) -- (12) node[above]{\textit{connected by a flat family}};
\end{tikzpicture}
\begin{tikzpicture}[every node/.style={midway}]
        \matrix[column sep={12em,between origins}, row sep={5em}] at (0,0) {
\node(11) {$C$};&\node(12) {$C'$};\\
\node(21) {$P$};&\node(22) {$P'$};\\
        };
        \draw[right hook->] (11) -- (21) ;
        \draw[right hook->] (12) -- (22) ;
        \draw[->] (11) -- (12) node[above]{\textit{continuous deformation}};
\end{tikzpicture}
\end{center}
By the way, a flat family $\{I_t\}$ between $I_C$ and $I_{C'}$ induces a deformation from a plane cubic $C$ to $C'$. Moreover, $P$ and $P'$ are unique planes in $G(k,n)$ such that $C \subset P$ and $C' \subset P'$. Therefore, a flat family between $I_C$ and $I_{C'}$ induces a continuous deformations form $P$ to $P'$ in $G(k,n)$. From basic algebraic topology, we can conclude that the cohomology class of $P$ and $P'$ are the same.
\end{proof}

\begin{theorem}
For $d \geq 3$ and $1<k<n-1$, $Hilb_{P_{d}(T)}(G(k,n))$ has two connected components.
\end{theorem}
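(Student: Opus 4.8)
The plan is to derive Theorem 4.2 from Lemma 4.1 together with the structural input of Section 3. First I would reduce to the case $k \le n-k$ using the isomorphism $G(k,n)\cong G(n-k,n)$, which identifies the two Hilbert schemes. By Corollary 3.1.4 a point of $Hilb_{P_d(T)}(G(k,n))$ is a degree $d$ planar curve $C$, and by Theorem 3.1 it is contained in a unique plane $P\cong\mathbb{P}^2_{\mathbb{C}}$; since $d\ge 3$, the argument of Theorem 3.2 applies verbatim (any Pl\"ucker quadric $Q_i$ containing $C$ must contain all of $P$, or else $P\cap Q_i$ would be a conic containing a curve of degree $\ge 3$), so $P\subset G(k,n)$. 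The Pieri computation of Theorem 3.2, specialized to $m=2$, then shows that $[P]\in H^*(G(k,n))$ is one of exactly two Schubert classes,
\[
\tau_1:=\sigma_{n-k,\dots,n-k,n-k-2}\qquad\text{or}\qquad\tau_2:=\sigma_{n-k,\dots,n-k,n-k-1,n-k-1},
\]
and I would note that the hypotheses $1<k<n-1$ together with $k\le n-k$ force $n-k\ge 2$ and $k\ge 2$, so that both classes are actually realized by planes sitting inside $G(k,n)$.

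Next I would write $Hilb_{P_d(T)}(G(k,n))=H_1\sqcup H_2$, where $H_i$ is the set of curves whose (unique) spanning plane has class $\tau_i$. This is a disjoint decomposition covering the whole Hilbert scheme. By Lemma 4.1, any two curves lying in the same connected component have spanning planes of the same cohomology class, so every connected component is contained entirely in $H_1$ or entirely in $H_2$. Hence it suffices to prove that $H_1$ and $H_2$ are each nonempty and connected, and the count of exactly two connected components follows.

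For the connectedness of each $H_i$ I would run the argument of Theorem 3.3 in the case $m=2$: the planes of class $\tau_1$ (resp. $\tau_2$) form a single orbit of $GL_n$ acting through the Pl\"ucker embedding, namely the flag variety $F(k-1,k+2;n)$ (resp. $F(k-2,k+1;n)$), which is irreducible; over it the tautological normal bundle $\mathcal{S}$ identifies $H_i$ with $\mathbb{P}(Sym^d\,\mathcal{S}^*)$, a projective bundle over an irreducible base, hence irreducible and in particular connected and nonempty. (Equivalently, one can simply observe that $H_i$ surjects onto the connected space of planes of class $\tau_i$ with fibers $\mathbb{P}(H^0(\mathcal{O}_{\mathbb{P}^2}(d)))$, which are connected.) Combining, $Hilb_{P_d(T)}(G(k,n))=H_1\sqcup H_2$ with both pieces connected and nonempty, so it has exactly two connected components.

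The main obstacle is genuinely Lemma 4.1, which I would treat as already established: without it the pieces $H_1$ and $H_2$ could a priori be joined by a flat family, because—in contrast with the higher-$m$ situation of Theorem 3.2—the curves in $H_1$ and those in $H_2$ all carry the \emph{same} cohomology class $d\,\sigma_{n-k,\dots,n-k,n-k-1}$, so they cannot be separated by deformation invariance of cohomology classes; one really needs the uniqueness of the spanning plane and the continuity-of-the-plane argument of Lemma 4.1. The remaining steps are bookkeeping: confirming via Pieri that no third class can occur for the spanning plane, and checking the numerical inequalities that make both $\tau_1$ and $\tau_2$ nonempty.
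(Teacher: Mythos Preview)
Your proposal is correct and follows essentially the same approach as the paper: identify points with degree $d$ planar curves, show via the Pl\"ucker-quadric argument that the spanning plane $P$ lies in $G(k,n)$, use Pieri to list the two possible classes of $P$, invoke Lemma~4.1 to see that the class of $P$ is constant on connected components, and then argue (as in Theorem~3.2/3.3) that the locus of curves with a given plane-class is connected and nonempty. You spell out more details than the paper's brief proof (notably the flag-variety/projective-bundle description of each $H_i$ and the explicit check that both classes occur), but the strategy is the same.
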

\begin{proof}
Most part of the proof is similar to the proof of Theorem 3.2. But the difference is that each connected component is determined by the cohomology class of $P$ instead of the cohomology class of $C$. Since there are two possible cohomology classes $\sigma_{n-k,\cdots,n-k,n-k-2}$ and $\sigma_{n-k,\cdots,n-k,n-k-1,n-k-1}$ for $P$, by using the lemma above, we can conclude that $Hilb_{P_{d}(T)}(G(k,n))$ has two connected components.
\end{proof}

The most interesting result of Theorem 4.2 is that one cohomology class in the Hilbert scheme could be separated into different connected components, although such phenomenon did not appear in Theorem 3.2.

For the geometry of $Hilb_{P_d(T)}(G(k,n))$, we can use the same method that we used in Theorem 3.3. If we combine with the theorem above then we can also understand the geometry of $Hilb_{P_d(T)}(G(k,n))$.

\begin{corollary}
Assume that $d \geq 3, 1<k<n-k$. Then, $Hilb_{P_{d}(T)}(G(k,n))$ is isomorphic to $\mathbb{P}(Sym^d~\mathcal{S}^*)$, where $\mathcal{S}$ is the tautological normal bundle over the union of flag varieties $F(k-1,k+2;n) \cup F(k-2,k+1;n)$
\end{corollary}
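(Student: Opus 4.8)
The plan is to combine Theorem 4.2 with the projective-bundle construction of Theorem 3.3, checking that the argument of the latter goes through verbatim in the case $m=2$ that its hypotheses excluded. By Corollary 3.1.3, every point of $Hilb_{P_d(T)}(G(k,n))$ is a degree $d$ planar curve $C$, and by the existence-and-uniqueness statement established in Theorem 3.2 the curve $C$ spans a unique plane $P\cong\mathbb{P}^2_{\mathbb{C}}$ contained in $G(k,n)$. Theorem 4.2 tells us the Hilbert scheme splits into exactly two connected components according to whether $[P]=\sigma_{n-k,\cdots,n-k,n-k-2}$ or $[P]=\sigma_{n-k,\cdots,n-k,n-k-1,n-k-1}$; so it is enough to identify each component with a projective bundle of the asserted shape, and then to observe that the union is disjoint because the two components carry different cohomology classes.

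For the component with $[P]=\sigma_{n-k,\cdots,n-k,n-k-2}$ I would run the Schubert-cycle computation of Case 1 in the proof of Theorem 3.3 with $m=2$: picking a complete flag $F_\bullet$ with $P=\Sigma_{n-k,\cdots,n-k,n-k-2}(F_\bullet)$, every $V\in P$ has Pl\"ucker form $v_1\wedge\cdots\wedge v_{k-1}\wedge(a_kv_k+a_{k+1}v_{k+1}+a_{k+2}v_{k+2})$, so $P$ is determined by the pair $\big(\mathrm{Span}\{v_1,\cdots,v_{k-1}\},\mathrm{Span}\{v_1,\cdots,v_{k+2}\}\big)$, i.e.\ by a point of $F(k-1,k+2;n)$, and under this identification $P$ is canonically the fiber $\mathbb{P}(W'/W)$ of the tautological normal bundle $\mathcal S$ over that point $(W,W')$. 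Symmetrically, Case 2 with $m=2$ realizes the component with $[P]=\sigma_{n-k,\cdots,n-k,n-k-1,n-k-1}$ over the flag variety $F(k-2,k+1;n)$, again with $P$ recovered as a fiber of $\mathcal S$.

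Then, exactly as in Theorem 3.3, over each of these flag varieties the projective bundle $\mathbb{P}(Sym^d\mathcal S^*)$ has fiber over $(W,W')$ equal to the linear system of degree $d$ hypersurfaces, i.e.\ degree $d$ plane curves, in $\mathbb{P}(W'/W)\cong\mathbb{P}^2_{\mathbb{C}}$. Using Corollary 3.1.3 together with the connectedness of the space of degree $d$ plane curves [1], this exhibits the corresponding connected component of $Hilb_{P_d(T)}(G(k,n))$ as the total space of $\mathbb{P}(Sym^d\mathcal S^*)$; taking the disjoint union of the two pieces yields $Hilb_{P_d(T)}(G(k,n))\cong\mathbb{P}(Sym^d\mathcal S^*)$ over $F(k-1,k+2;n)\cup F(k-2,k+1;n)$.

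The main difficulty is not new geometry — the cohomology-class bookkeeping is already done in Theorems 3.2 and 4.2 — but rather upgrading these identifications from bijections of point sets to isomorphisms of schemes: one must verify that the tautological family of degree $d$ curves over each flag variety is flat with Hilbert polynomial $P_d(T)$, so that it induces a morphism to the Hilbert scheme; that the assignment sending a curve to its unique spanning plane, hence to a point of the flag variety together with a point of the fiber, is a morphism in the reverse direction; and that the two morphisms are mutually inverse. Here one leans on the smoothness of the Hilbert scheme obtained through Theorem 3.1 to rule out embedded or non-reduced points, so that a bijective morphism between these smooth varieties is genuinely an isomorphism.
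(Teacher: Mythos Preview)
Your proposal is correct and follows exactly the approach the paper intends: the paper states this corollary without proof, merely remarking that one ``can use the same method that we used in Theorem 3.3'' combined with Theorem 4.2, which is precisely what you do by rerunning Cases 1 and 2 with $m=2$. Your final paragraph on upgrading set-theoretic bijections to scheme isomorphisms is more careful than the paper itself, which treats these identifications informally throughout.
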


\bibliographystyle{amsplain}

\end{document}